\documentclass{amsart}
\usepackage[T1]{fontenc}
\usepackage[utf8]{inputenc}
\usepackage{dsfont,amssymb}
\usepackage[mathscr]{eucal}
\usepackage{enumitem}
\usepackage{color}

\usepackage[bookmarks,colorlinks]{hyperref}

\hypersetup{
    linkcolor=brickred,
    citecolor=brickred
}

\definecolor{brickred}{cmyk}{0, 0.89, 0.94, 0.28}

\sloppy
\numberwithin{equation}{section}

\newtheorem{theorem}{Theorem}[section]
\newtheorem{lemma}[theorem]{Lemma}

\theoremstyle{definition}

\DeclareMathOperator{\id}{Id}
\DeclareMathOperator{\re}{Re}
\DeclareMathOperator{\im}{Im}
\DeclareMathOperator{\res}{Res}

\DeclareMathOperator{\ind}{\mathds{1}}

\newcommand{\dht}{\mathscr{H}}
\newcommand{\rtt}{\dht_{\mathrm{RT}}}
\newcommand{\kht}{\dht_{\mathrm{K}}}
\newcommand{\adp}{\dht_{\mathrm{ADP}}}
\newcommand{\intertwine}{\mathscr{K}}
\newcommand{\mdht}{\mathscr{J}}

\newcommand{\CA}{\mathscr{A}}
\newcommand{\CB}{\mathscr{B}}
\newcommand{\CC}{\mathscr{C}}
\newcommand{\CD}{\mathscr{D}}
\newcommand{\CE}{\mathscr{E}}
\newcommand{\CF}{\mathscr{F}}
\newcommand{\CG}{\mathscr{G}}
\newcommand{\CI}{\mathscr{I}}
\newcommand{\CK}{\mathscr{K}}

\newcommand{\CT}{\mathscr{T}}
\newcommand{\MA}{A}
\newcommand{\MH}{H}
\newcommand{\MI}{I}
\newcommand{\trans}{\star}

\newcommand{\pr}{\mathds{P}}
\newcommand{\ex}{\mathds{E}}
\newcommand{\C}{\mathds{C}}
\newcommand{\R}{\mathds{R}}
\newcommand{\Z}{\mathds{Z}}

\newcommand{\eps}{\varepsilon}

\newcommand{\abs}[1]{\left| #1 \right|}

\newcommand{\expr}[1]{\left( #1 \right)}

\renewcommand{\le}{\leqslant}
\renewcommand{\ge}{\geqslant}

\usepackage{ifthen}
\newcommand{\formula}[2][nolabel]%
{%
 \ifthenelse{\equal{#1}{nolabel}}%
 {\begin{align*} #2 \end{align*}}%
 {%
  \ifthenelse{\equal{#1}{}}%
  {\begin{align} #2 \end{align}}%
  {\begin{align} \label{#1} #2 \end{align}}%
 }%
}

\begin{document}

\title[Discrete Hilbert transform]{On the $\ell^p${-}norm of the Discrete Hilbert transform}
\author{Rodrigo Ba\~nuelos}
\address{Department of Mathematics \\ Purdue University \\ 150 N.\@ University Street, West Lafayette, IN 47907-2067, USA}
\email{banuelos@purdue.edu}
\author{Mateusz Kwa\'snicki}
\address{Faculty of Pure and Applied Mathematics \\ Wrocław University of Science and Technology \\ ul.\@ Wybrzeże Wyspiańskiego 27 \\ 50-370 Wrocław, Poland}
\email{mateusz.kwasnicki@pwr.edu.pl}
\thanks{R.~Ba\~nuelos is supported in part by NSF Grant \#1403417-DMS. M.~Kwa\'snicki is supported by the Polish National Science Centre (NCN) grant no.\@ 2015/19/B/ST1/01457.}
\date{\today}
\keywords{Discrete Hilbert transform, martingale transform, Burkholder inequalties, Gundy--Varopoulos representation}

\begin{abstract}
Using a representation of the discrete Hilbert transform in terms of martingales arising from Doob $h$-processes, 
we prove that its $\ell^p$-norm, $1<p<\infty$, is bounded above by the $L^p${-}norm of the continuous Hilbert transform. Together with the already known lower bound, this resolves the long{-}standing conjecture that the norms of these operators are equal. 
\end{abstract}

\maketitle

\tableofcontents

%
%

\section{Introduction and main results}

The \emph{discrete Hilbert transform} is an operator $\dht$ which maps the sequence $(a_n)$ to the sequence $(\dht a_n)$ defined by
\formula[eq:dht]{
 \dht a_n & = \frac{1}{\pi} \sum_{m \in \Z \setminus \{0\}} \frac{a_{n - m}}{m} \, .
}
This operator was introduced by D.~Hilbert in the first decade of 20th century, and it was proved by M.~Riesz in~\cite{bib:r27,bib:r28} and E.~C.~Titchmarsh in~\cite{bib:t26,bib:t28} that for $p \in (1, \infty)$ it is a bounded operator on $\ell^p$, the space of (doubly infinite) sequences $(a_n)$ with finite $\ell^p$-norm
\formula{
 \|(a_n)\|_p & = \expr{\sum_{n \in \Z} |a_n|^p}^{\!1/p}. 
}
See Section 8.12 in~\cite{bib:hlp34} for further details. The purpose of this paper is to evaluate its $\ell^p$-norm, henceforth denoted by $\|\dht\|_{p \rightarrow p}$. As shown by E.~Laeng (Theorem~4.3 in~\cite{bib:l07}), it is known that $\|\dht\|_{p \rightarrow p}$ is at least as large as the norm of the continuous Hilbert transform on $L^p(\R)$. The latter was found by S.~Pichorides in~\cite{bib:p72} to be equal to $\cot(\pi / (2 p^*))$, where $p^* = \max(p, p / (p - 1))$. The equality of these norms is a long-standing conjecture, initiated by an erroneous proof of E.~C.~Titchmarsh in~\cite{bib:t26}. In~\cite{bib:l07}, Theorem~4.5, its validity is proved for $p = 2^k$ or $p = 2^k / (2^k - 1)$ for $k = 1, 2, \ldots$\, The proof is attributed to I.~E.~Verbitsky. The following result proves the upper bound in full generality and together with the known lower bound settles the conjecture. 

\begin{theorem}\label{thm:dht}
Let $(a_n)$ be a sequence in $\ell^p$, $1<p<\infty$. Then 
\formula[eq:dht:norm]{
 \|(\dht a_n)\|_p & \le \cot\!\expr{\frac{\pi}{2 p^*}} \|(a_n)\|_p ,
}
where $p^* = \max(p, p / (p - 1))$. Consequently, $\|\dht\|_{p \rightarrow p} \le \cot(\pi / (2 p^*))$.
\end{theorem}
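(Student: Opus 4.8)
The plan is to represent the discrete Hilbert transform as the conditional expectation (an orthogonal projection) of an orthogonal martingale transform, and then to invoke the sharp $L^p$ inequality for such transforms, whose constant is exactly the Pichorides value $\cot(\pi/(2 p^*))$. This is the discrete counterpart of the Gundy--Varopoulos representation of the continuous Hilbert transform, in which the harmonic conjugate in the upper half-plane $\R \times (0, \infty)$ is realized by transforming the gradient of the harmonic extension through a rotation by $\pi/2$. The only genuinely new ingredient is a mechanism that forces the background process to feel the integer lattice rather than all of $\R$, and this is where the Doob $h$-process enters.

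The starting observation is that the kernel $1/(\pi m)$ of $\dht$ is governed by the partial-fraction expansion $\pi \cot(\pi z) = \sum_{m} 1/(z - m)$, whose poles sit at the integers, each with residue $1$. Accordingly I would take $h(x, y) = -\im \cot(\pi(x + i y))$, which is, up to a multiplicative constant, the Poisson extension to the half-plane of the Dirac comb $\sum_{n} \delta_n$; it is positive and harmonic for $y > 0$, and its boundary measure is carried by $\Z$. Brownian motion in the half-plane, $h$-conditioned by this function, then has a terminal (Martin boundary) distribution supported on $\Z$, so that conditioning on the exit point $n$ amounts precisely to projecting the continuous picture onto the lattice.

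Given $(a_n) \in \ell^p$, I would form the harmonic function $u$ in the half-plane associated with $(a_n)$ through the same Poisson/$\cot$ representation, run the $h$-process $X_t$, and build the martingale $M_t = u(X_t)$ together with its orthogonal transform $N_t$, obtained by integrating the rotated gradient of $u$ against the Brownian differentials. The Cauchy--Riemann equations make $N$ the boundary conjugate of $M$, and the residue of $\cot(\pi z)$ at each integer converts this conjugation into convolution with $1/(\pi m)$; the identities to establish are $\ex[M_\infty \mid X_\infty = n] = a_n$ and $\ex[N_\infty \mid X_\infty = n] = \dht a_n$. Because $N$ arises from $M$ by a pointwise rotation of the gradient, the pair $(M, N)$ is orthogonal and $N$ is differentially subordinate to $M$.

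With the representation in hand, \eqref{eq:dht:norm} follows from the sharp $L^p$ bound for orthogonal (conformal) martingale transforms, whose norm equals $\cot(\pi/(2 p^*))$; conditional Jensen together with the contractivity of conditional expectation transfers the estimate from the martingales to $(a_n)$ and $(\dht a_n)$, while the two ranges $p \ge 2$ and $1 < p \le 2$ are linked by the self-dual form of that martingale inequality, or directly by $\ell^p$-duality. The main obstacle is the middle step: justifying the $h$-conditioning rigorously for the infinite-mass Dirac comb (normalization, transience, and existence of the terminal distribution on $\Z$) and proving that the conditional expectation of the conjugate martingale reproduces the kernel $1/(\pi m)$ \emph{exactly} rather than in an approximate or limiting sense. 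Equally essential is verifying genuine orthogonality of $(M, N)$, since it is orthogonality — not mere differential subordination — that upgrades Burkholder's constant $p^* - 1$ to the sharp value $\cot(\pi/(2 p^*))$.
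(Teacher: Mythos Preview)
Your framework is the right one and matches the paper's: the $h$-process built from the Poisson extension of the Dirac comb, the martingale $M_t=u(Z_t)$, its orthogonal transform $N_t=\int_0^t H\nabla u(Z_s)\cdot dB_s$ with $H$ the rotation by $\pi/2$, and the Ba\~nuelos--Wang inequality~\eqref{eq:ortho}. The gap is in the identity you hope to establish,
\[
\ex\bigl[N_\infty\mid X_\infty=n\bigr]=\dht a_n.
\]
This is \emph{false}. When you compute the conditional expectation, the $h$-conditioning introduces drift terms $\nabla p_n/p_n-\nabla h/h$ that do not cancel by Cauchy--Riemann alone; working the integral out (formula~\eqref{eq:mdht:nm} and Lemma~\ref{lem:int}) the kernel one actually obtains is
\[
\mdht_n=\frac{1}{\pi n}\Bigl(1+\int_0^\infty\frac{2y^3}{(y^2+\pi^2 n^2)\sinh^2 y}\,dy\Bigr),
\]
which strictly dominates $1/(\pi n)$. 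So the martingale construction proves Theorem~\ref{thm:mdht} for the convolution operator $\mdht$, not Theorem~\ref{thm:dht} for $\dht$.

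The missing step, which has no analogue in the continuous Gundy--Varopoulos story, is Lemma~\ref{lem:dht:mdht}: one must show that $\dht=\CK\mdht$ for a convolution operator $\CK$ with \emph{nonnegative} kernel of total mass one, so that $\|(\dht a_n)\|_p\le\|(\mdht a_n)\|_p$. This requires writing $\mdht=\dht(\CI+\CE)$ for an explicit summable sequence $(\CE_n)$ with $\CE_n<0$ for $n\ne0$, $\CE_0>0$, and $\sum\CE_n=0$ (Lemma~\ref{lem:ihj}), and then inverting $\CI+\CE$ by a geometric series whose terms stay nonnegative. Your ``main obstacle'' paragraph correctly flags the kernel identification as the delicate point, but anticipates the wrong outcome: the conditional expectation does not reproduce $1/(\pi m)$ exactly, and bridging the discrepancy is where the real work lies.
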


For a detailed discussion of the history of the above problem and related topics we refer the reader to E.~Laeng~\cite{bib:l07}, as well as to~\cite{bib:adp18,bib:ds16}. A similar question for second-order discrete Riesz transforms was resolved in~\cite{bib:dp14}, see also~\cite{bib:dop18}.

The literature on harmonic analysis in the discrete setting, and in particular on singular integrals and other classical operators, has a long history. In addition to the above papers, we refer to A.~Calder\'on and A.~Zygmund~\cite{CalZyg}, L.~B.~Pierce~\cite{Pie}, F.~Lust-Piquard~\cite{Piq} and E.~M.~Stein and S.~Wainger~\cite{SteWai1,SteWai2}, for a sample of some of this literature. 

It should be pointed out that there are other operators known under the name \emph{discrete Hilbert transform}. In particular, the version studied by M.~Riesz and E.~C.~Titchmarsh (and therefore sometimes called the Riesz--Titchmarsh transform),
\begin{equation}\label{Riesz1}
 \rtt a_n = \frac{1}{\pi} \sum_{m \in \Z} \frac{a_{n - m}}{m + 1/2}
\end{equation}
is often considered. Sometimes $m + 1/2$ in the denominator is replaced by $m - 1/2$. Another variant was introduced by S.~Kak in~\cite{bib:k70},
\begin{equation}\label{Riesz2}
 \kht a_n = \frac{2}{\pi} \sum_{m \in 2 \Z + 1} \frac{a_{n - m}}{m},
\end{equation}
where $2 \Z + 1$ denotes the set of odd integers. Unlike~\eqref{eq:dht}, $\rtt$ and $\kht$ are unitary operators on $\ell^2$. It is easy to see that both~\eqref{Riesz1} and~\eqref{Riesz2} lead to operators with equal norms on $\ell^p$. Finiteness of the $\ell^p$ norm of these operators was already proved by Riesz and Titchmarsh.

In \cite{CiaGilRonTor}, \'O.~Ciaurri, T.~A. Gillespie, L.~Roncal, J.~L. Torrea and J.~L.~Varona studied the Riesz--Titchmarsh operator $\rtt$ as a Riesz transforms of the discrete Laplacian on $\Z$,
\formula[torrea1]{
 \Delta_{\mathrm{d}} a_n & = a_{n+1}-2a_n+a_{n-1} .
}
Since $\Delta_{\mathrm{d}}=\tilde{\mathscr{D}}\mathscr{D}$, where $\mathscr{D}a_n=a_{n+1}-a_n$ and $\tilde{\mathscr{D}}a_n=a_n-a_{n-1}$, the corresponding Riesz transforms are naturally defined by 
\formula[torrea2]{
\mathscr{R}a_n & =\mathscr{D}(-\Delta_{\mathrm{d}})^{-1/2}a_n, &
\tilde{\mathscr{R}}a_n & =\tilde{\mathscr{D}}(-\Delta_{\mathrm{d}})^{-1/2}a_n.
}
Then $\mathscr{R} = \rtt$, and $\tilde{\mathscr{R}}$ is its variant with $m - 1/2$ in the denominator. Using the Poisson semigroup for the operator $\Delta_{\mathrm{d}}$, constructed in the usual way as a Bochner subordination of the heat semigroup, it is shown in~\cite{CiaGilRonTor} that $\mathscr{R}$ and $\tilde{\mathscr{R}}$ arise as boundary values of conjugate harmonic functions in the upper half-space $\Z\times\R_{+}$, which satisfy a version of Cauchy--Riemann equations. Furthermore, it is shown in~\cite{CiaGilRonTor} that if  $w$ is a (discrete) Muckenhoupt $A_p$--weight, $1 \le p < \infty$, then $\rtt = \mathscr{R}$ and $\tilde{\mathscr{R}}$ are bounded operators on $\ell^p(w)$, $1<p<\infty$, and map $\ell^1(w)$ to weak-$\ell^1(w)$. A more direct proof (without the Littlewood--Paley square functions in~\cite{CiaGilRonTor}) was given by R.~Hunt, B.~Muckenhoupt and R.~Wheeden in~\cite{HunMucWhe}, and for $w=1$ this follows from~\cite{CalZyg}. 

Finally, in~\cite{bib:adp18} N.~Arcozzi, K.~Domelevo and S.~Petermichl introduced the symmetrised Riesz--Titchmarsh transform, which is the average of the Riesz transforms $\mathscr{R}$, $\tilde{\mathscr{R}}$ studied in~\cite{CiaGilRonTor}. That is, they consider the operator
\formula[Riesz3]{
\begin{aligned}
 \adp a_n = \frac{1}{2} \, (\mathscr{R}+\tilde{\mathscr{R}}) a_n & = \frac{1}{2 \pi} \sum_{m \in \Z} \expr{\frac{a_{n - m}}{m + 1/2} + \frac{a_{n - m}}{m - 1/2}}\\
  & = \frac{1}{\pi} \sum_{m \in \Z} \frac{m a_{n - m}}{m^2 - 1/4}.
\end{aligned}
}
It has been conjectured that the norms of $\dht$ and $\rtt = \mathscr{R}$ (or $\tilde{\mathscr{R}}$, or $\kht$)
on $\ell^p$ are equal; see~\cite{bib:adp18,bib:ds16,bib:l07} for further discussion. Furthermore, there is also a version of $\rtt$ for any $\alpha \in (0, 1)$ replacing $1/2$, as discussed in~\cite{bib:l07}, with a natural conjecture on what its norm should be; see Conjecture~5.7 in~\cite{bib:l07}. Unfortunately, the proof of Theorem~\ref{thm:dht} cannot be easily adapted to resolve these conjectures.

The core part of the proof of Theorem~\ref{thm:dht} follows the probabilistic proof for the continuous Hilbert and Riesz transforms. In the classical setting of $\R^d$, the It\^o formula allows us to represent an $L^p(\R^d)$ function $f$ as a stochastic integral that involves a harmonic extension of $f$ to the upper half-space $\R^{d+1}_{+}=\R^d \times (0, \infty)$ and a $(d + 1)$-dimensional Brownian motion. In dimension one, the representation of the continuous Hilbert transform of $f$ as a martingale transform of this stochastic integral is well-known due to the Cauchy--Riemann equations and the fact that harmonic functions composed with Brownian motion produce martingales. In higher dimensions, R.~F.~Gundy and N.~T.~Varopoulos in~\cite{bib:gv79} expressed the Riesz transform of $f$ in a similar way, this time as the conditional expectation of a martingale transform of the stochastic integral representing $f$. This case naturally leads to the notion of differentially subordinate martingales with an additional probabilistic orthogonality property. In~\cite{bib:bw95}, R.~Ba\~nuelos and G.~Wang extended Burkholder's celebrated inequalities for martingale transforms under the orthogonality assumption and gave a probabilistic proof of the result of Pichorides for the Hilbert transform ($d = 1$) and its extension to Riesz transform ($d > 1$) due to T.~Iwaniec and G.~Martin~\cite{IwMa}. We also quote here closely related articles of I.~E.~Verbitsky~\cite{bib:v80:verbitsky} and M.~Ess\'en~\cite{bib:e84}. For more on orthogonal martingales and applications, we refer the reader to Chapter~6 of the monograph of A.~Os\k{e}kowski~\cite{bib:o12} and Sections~2.2, 3.1 and~3.2 of the survey article by R.~Ba\~nuelos~\cite{bib:b10}.

In order to prove Theorem~\ref{thm:dht} we use a similar method, but we build the pair of martingales using two-dimensional Brownian motion conditioned to hit the boundary of the upper half-plane $\R^2_{+}$ at a lattice point, instead of the usual Brownian motion used in the classical case. This requires certain modifications in the argument: we use Doob $h$-transforms of the Brownian motion similar to those used by R.~Ba\~nuelos in~\cite{bib:b86} in the continuous case. This construction leads to an operator $\mdht$ that is different from $\dht$, as well as from $\rtt = \mathscr{R}$, $\tilde{\mathscr{R}}$, $\kht$ and $\adp$. The corresponding estimate is stated in the following result.

\begin{theorem}\label{thm:mdht}
Let
\formula{
 \mdht_n & = \frac{1}{\pi n} \expr{1 + \int_0^\infty \frac{2 y^3}{(y^2 + \pi^2 n^2) \sinh^2 y} \, dy}
}
for $n \ne 0$, and $\mdht_0 = 0$. Let $(a_n)$ be a sequence in $\ell^p$, $1<p< \infty$, and let
\formula{
 \mdht a_n & = \sum_{m \in \Z} \mdht_m a_{n - m} .
}
Then
\formula[eq:mdht:norm]{
 \|(\mdht a_n)\|_p & \le \cot\!\expr{\frac{\pi}{2 p^*}} \|(a_n)\|_p ,
}
where $p^* = \max(p, p / (p - 1))$. The constant in the above inequality is best possible, and consequently $\|\mdht\|_{p \rightarrow p} = \cot(\pi / (2 p^*))$.
\end{theorem}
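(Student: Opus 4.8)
The plan is to represent the operator $\mdht$ probabilistically and then invoke the orthogonal martingale inequalities of Bañuelos--Wang to obtain the constant $\cot(\pi/(2p^*))$. First I would set up the relevant stochastic apparatus: take a two-dimensional Brownian motion in the upper half-plane $\R^2_+$ started from a suitable initial distribution, and condition it (via a Doob $h$-transform, with $h$ the appropriate Poisson-type harmonic function) to exit $\R^2_+$ at the lattice point $n$ on the real axis. The precise closed form of the kernel $\mdht_n$, involving the integral $\int_0^\infty 2y^3/((y^2+\pi^2 n^2)\sinh^2 y)\,dy$, should arise as the exit distribution / Green's-function computation for this conditioned process; so the first technical task is to verify that $\mdht_n$ is exactly the kernel produced by this construction. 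I expect this identification to rest on the explicit formula for the Poisson kernel of the half-plane together with the $h$-transform tilting, and on a residue or Fourier computation to evaluate the resulting series.

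Next I would build the pair of martingales. Given $(a_n)\in\ell^p$, form the harmonic (Poisson) extension $u$ to $\R^2_+$ of the measure $\sum_n a_n \delta_n$ on the lattice, and let $v$ be its harmonic conjugate. Composing $u$ and $v$ with the conditioned Brownian motion $(Z_t)$ gives a martingale $X_t = u(Z_t)$ and its transform $Y_t = v(Z_t)$; by the Cauchy--Riemann equations the quadratic variations satisfy $d\langle Y\rangle = d\langle X\rangle$ and, crucially, the orthogonality condition $\langle X, Y\rangle \equiv 0$ (the continuous counterpart of the mutual-variation vanishing used by Bañuelos--Wang). Thus $Y$ is differentially subordinate and orthogonal to $X$. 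The output operator $\mdht$ should then emerge as the boundary operator sending $(a_n)$ to the terminal values of $Y$, i.e.\ $\ex[Y_\infty \mid Z_\infty = m]$ reconstructs $\mdht a_n$ after the $h$-conditioning fixes the exit point; this is where the specific kernel $\mdht_m$ enters as the weight of the conditioned exit measure.

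With this representation in hand, the $L^p$ bound is immediate from the Bañuelos--Wang inequality: for orthogonal, differentially subordinate martingales one has $\|Y_\infty\|_p \le \cot(\pi/(2p^*))\,\|X_\infty\|_p$, and projecting back to the lattice via the exit distribution yields $\|(\mdht a_n)\|_p \le \cot(\pi/(2p^*))\,\|(a_n)\|_p$. The only remaining point is sharpness. For the lower bound I would exhibit a sequence of inputs $(a_n)$ that asymptotically saturate the orthogonal martingale inequality — the standard route is to use inputs that, after the Poisson extension, locally approximate the extremal conjugate-harmonic configuration underlying Pichorides' constant (a suitably scaled analytic function whose real and imaginary parts realize the worst-case ratio). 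Combined with the already-established general fact that $\cot(\pi/(2p^*))$ is the sharp constant for orthogonal martingale transforms, this forces $\|\mdht\|_{p\to p} = \cot(\pi/(2p^*))$.

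The main obstacle I anticipate is not the martingale inequality itself — that is black-boxed from Bañuelos--Wang — but the bookkeeping of the Doob $h$-transform: one must check that conditioning the planar Brownian motion to exit at a single lattice point produces a well-defined process whose exit mechanics reproduce exactly the kernel $\mdht_n$, and that the conjugate function $v$ composed with this \emph{conditioned} (rather than free) motion still yields a genuine martingale with the orthogonality property intact. Verifying that the $h$-conditioning does not destroy the Cauchy--Riemann/orthogonality structure, and that the terminal values assemble into the convolution operator $\mdht a_n = \sum_m \mdht_m a_{n-m}$ with the claimed kernel, is the delicate step where the explicit integral formula for $\mdht_n$ must be reconciled with the probabilistic construction.
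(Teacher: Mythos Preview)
Your overall strategy --- represent $\mdht$ via an $h$-conditioned planar Brownian motion and invoke the Ba\~nuelos--Wang orthogonal-martingale inequality --- matches the paper's, but the specific construction you propose has a gap that would cause it to fail. You take the \emph{harmonic} Poisson extension $u$ of the data and its harmonic conjugate $v$, then set $X_t=u(Z_t)$, $Y_t=v(Z_t)$ for the conditioned process. Under the Doob $h$-transform (with $h=\sum_n p_n$), however, $Z_t$ carries the drift $\nabla h/h$, so $f(Z_t)$ is a local martingale only when $f$ is \emph{$h$-harmonic}, not harmonic. Thus your $X_t$ is not a martingale. The paper instead takes $u=\sum_n a_n\,p_n/h$, which is $h$-harmonic, so that $M_t=u(Z_t)$ is a genuine martingale with $dM_t=\nabla u(Z_t)\cdot dB_t$.

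The deeper issue is the role of $v$. Once $u$ is $h$-harmonic, the vector field $H\nabla u$ (with $H$ the rotation by $\pi/2$) is \emph{not} a gradient, so there is no function $v$ with $Y_t=v(Z_t)$ playing the conjugate role. The paper's partner martingale is the \emph{martingale transform} $H\star M_t:=\int_0^t H\nabla u(Z_s)\cdot dB_s$, which is still differentially subordinate and orthogonal to $M_t$ (so Ba\~nuelos--Wang applies) but is genuinely path-dependent, not a function of $Z_t$. Consequently $\ex\bigl[H\star M_{\zeta-}\,\big|\,Z_{\zeta-}=2\pi n\bigr]$ is a nontrivial integral against the occupation density (Green function) of the $p_n$-conditioned process; it is this computation --- carried out by explicit residue calculations after sending the starting height $y_0\to\infty$ --- that produces the kernel $\tfrac{1}{\pi n}\bigl(1+\int_0^\infty 2y^3/((y^2+\pi^2 n^2)\sinh^2 y)\,dy\bigr)$. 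Had $Y_t$ been of the form $v(Z_t)$, the conditional expectation would collapse to $v(2\pi n,0)$ and no such kernel could emerge. So your anticipated obstacle is exactly the point of failure: the $h$-conditioning \emph{does} destroy the naive Cauchy--Riemann picture, and the repair is not to check that $v(Z_t)$ remains a martingale (it does not) but to replace it with the stochastic-integral transform and then evaluate the resulting Green-function integral explicitly.
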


As we shall see below, the kernel $(\mdht_n)$ arises naturally when taking the conditional expectation of the martingale transform constructed via the same matrix that gives the continuous Hilbert transform. This kernel clearly dominates the kernel of the discrete Hilbert transform $\dht$, in the sense that $\mdht_n > 1 / (\pi n) > 0$ when $n > 0$ and $\mdht_n < 1 / (\pi n) < 0$ when $n < 0$. This alone, however, does not mean that the norm of $\dht$ on $\ell^p$ is bounded by the corresponding norm of $\mdht$. In order to derive Theorem~\ref{thm:dht} from Theorem~\ref{thm:mdht}, we prove that the discrete Hilbert transform is the composition of $\mdht$ and a convolution operator with probability kernel. This is a completely new phenomenon, which, to the best of our knowledge, has not appeared in the copious applications of martingale transform inequalities to singular integrals, especially to Riesz transforms in wide geometric settings, including $\R^d$, Wiener space (the Ornstein--Uhlenbeck operator), Lie groups and manifolds; see, for example, \cite{BanBau}, \cite{BanOse},  and references therein. In all previous such applications where optimal (or near optimal) constants are obtained, the operators are exactly given as projections (conditional expectations) of martingale transforms. That is, the analytic operators are factored as the composition of two operators: the martingale transform, which gives the optimal $L^p$ bound, and conditional expectation, which preserves the bound. In the present case of the discrete Hilbert transform these two operators are followed by a third one, a convolution with a probability kernel, which also does not increase the norm. The following lemma makes this precise.

\begin{lemma}\label{lem:dht:mdht}
If $(a_n)$ is a sequence in $\ell^p$ for some $p \in (1, \infty)$, then
\begin{equation}\label{proker1}
 \dht a_n = \sum_{m \in \Z} \intertwine_{n - m} \mdht a_m
\end{equation}
for an appropriate nonnegative sequence $(\intertwine_n)$ with total mass $1$.
\end{lemma}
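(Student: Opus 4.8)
The plan is to pass to Fourier multipliers on the torus. For a convolution kernel $(b_n)$ write its symbol $m_{(b_n)}(\xi) = \sum_{n \in \Z} b_n e^{-i n \xi}$, $\xi \in (-\pi, \pi)$. Since $\dht$, $\mdht$ and the sought operator $\intertwine$ are all convolutions, \eqref{proker1} is equivalent to the single pointwise identity $m_{\dht}(\xi) = m_{\intertwine}(\xi) \, m_{\mdht}(\xi)$. Thus $\intertwine$ is forced, $m_{\intertwine} = m_{\dht} / m_{\mdht}$, and the entire content of the lemma is that this quotient is the symbol of a nonnegative sequence of total mass $1$, i.e.\ the characteristic function of a $\Z$-valued random variable.

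First I would record the two known symbols. For the discrete Hilbert transform one has the classical piecewise-linear formula $m_{\dht}(\xi) = -i \, \sign(\xi) \, (1 - |\xi|/\pi)$. For $\mdht$ I would substitute the integral representation of $\mdht_n$, interchange summation and integration (justified by absolute convergence), and evaluate the resulting lattice sums by the classical identity $\sum_{n \in \Z} (n^2 + a^2)^{-1} \cos(n\xi) = (\pi/a) \cosh(a(\pi - |\xi|)) / \sinh(a\pi)$ together with its $\xi$-derivative. After simplification, writing $s = 1 - |\xi|/\pi$, this should give $m_{\mdht}(\xi) = i \, \sign(\xi) \, \Phi(s)$ with
\[
 \Phi(s) = -s + 2 \int_0^\infty \frac{y}{\sinh^2 y} \Bigl( \frac{\sinh(sy)}{\sinh y} - s \Bigr) dy .
\]
Since $s \sinh y \ge \sinh(sy)$ for $0 \le s \le 1$ (the difference vanishes at $y = 0$ and has derivative $s(\cosh y - \cosh sy) \ge 0$), the integrand is $\le 0$, so $\Phi(s) \le -s < 0$ on $(0,1)$, while $\Phi(1) = -1$.

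Consequently $m_{\intertwine}(\xi) = (1 - |\xi|/\pi) \big/ \bigl( -\Phi(1 - |\xi|/\pi) \bigr)$ is a real, even, continuous function with $m_{\intertwine}(0) = 1$ and $0 < m_{\intertwine} \le 1$. The value $1$ at the origin yields the total mass $\sum_n \intertwine_n = 1$ once the $\intertwine_n$ are known to be summable. What does not come for free, and is the crux of the lemma, is the nonnegativity of the coefficients $\intertwine_n$: positivity of the symbol $m_{\intertwine}$ is by no means sufficient.

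To obtain nonnegativity I would exhibit $m_{\intertwine}$ as a mixture of elementary characteristic functions. The same lattice-sum identity shows that, for each $y > 0$,
\[
 \frac{\cosh\bigl( y(1 - |\xi|/\pi) \bigr)}{\cosh y} = \sum_{n \in \Z} p^{(y)}_n e^{-i n \xi}, \qquad p^{(y)}_n = \frac{y \tanh y}{y^2 + \pi^2 n^2} \ge 0, \quad \sum_{n \in \Z} p^{(y)}_n = 1 ,
\]
so every $\cosh(sy)/\cosh y$ is the characteristic function of a genuine probability distribution on $\Z$. The plan is then to produce a probability measure $\mu$ on $(0,\infty)$ with $-s/\Phi(s) = \int_0^\infty \bigl( \cosh(sy)/\cosh y \bigr) \, d\mu(y)$ for all $s \in (0,1]$; granting this, $\intertwine_n = \int_0^\infty p^{(y)}_n \, d\mu(y) \ge 0$ and $\sum_n \intertwine_n = 1$ are both immediate. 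Constructing $\mu$ and proving it is nonnegative is the main obstacle. I expect it to be supplied by the probabilistic model underlying $\mdht$: the Doob $h$-transform of planar Brownian motion conditioned to exit $\R^2_{+}$ on the lattice, with the periodized Poisson kernel $\Theta(x,y) = \sinh(2\pi y)/(\cosh(2\pi y) - \cos(2\pi x))$ as the conditioning harmonic function, should identify $\mu$ with an exit or occupation law of the conditioned process, thereby making the positivity of $(\intertwine_n)$ structural rather than a brute-force verification.
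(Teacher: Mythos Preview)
Your reduction to the multiplier quotient $m_{\intertwine}=m_{\dht}/m_{\mdht}$ is correct and the symbol computations are sound, but the proposal stops short of a proof precisely where you say it does: you need $s/(-\Phi(s))$ to be a mixture $\int_0^\infty (\cosh(sy)/\cosh y)\,d\mu(y)$ with $\mu\ge 0$, and you give no argument for this. The hope that the Doob $h$-process supplies $\mu$ is not substantiated: the probabilistic model produces $\mdht$ as a projection of a martingale transform, not the inverse operator $\intertwine=\dht\,\mdht^{-1}$, and there is no evident occupation-time or exit-law interpretation of the latter. As it stands you have only shown that $m_{\intertwine}$ is positive, even, continuous, bounded by~$1$, and equals~$1$ at the origin --- which, as you yourself note, does not force nonnegative Fourier coefficients.

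The paper's argument is quite different and avoids any integral-representation problem. Rather than work with symbols, it produces an explicit kernel identity $\mdht=\dht(\CI+\CE)$, where
\[
\CE_n=-\int_0^\infty \frac{2y}{\sinh^3 y}\biggl(\int_0^y\frac{t\sinh t}{t^2+\pi^2 n^2}\,dt\biggr)dy\qquad(n\ne 0),
\]
with a matching formula for $\CE_0$. The identity $\dht\CE=\mdht-\dht$ is verified term by term by computing, in closed form, the discrete Hilbert transform of the sequences $(y^2+\pi^2 n^2)^{-1}$ and then integrating against suitable densities. From the explicit formula one reads off directly that $\CE_n<0$ for $n\ne 0$, $\CE_0>0$, and $\sum_n\CE_n=0$. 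Setting $\alpha=(1+\CE_0)^{-1}$ and $\CG_n=-\alpha\CE_n$ for $n\ne 0$, $\CG_0=0$, one gets $\CG_n\ge 0$, $\sum_n\CG_n=1-\alpha<1$, and $\mdht=\alpha^{-1}\dht(\CI-\CG)$; the Neumann series $\intertwine=\alpha\sum_{k\ge 0}\CG^k$ then has nonnegative entries summing to~$1$ and satisfies $\intertwine\mdht=\dht$.

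So the missing ingredient in your route is a genuine Bernstein-type representation for $s\mapsto s/(-\Phi(s))$, which nothing you wrote establishes and which the probabilistic model does not obviously provide. The paper sidesteps this entirely by staying on the kernel side, where the needed sign properties of $\CE$ are visible by inspection, and obtains positivity of $(\intertwine_n)$ from the Neumann inversion rather than from any integral formula.
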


By convexity of the $\ell^p$--norm, \eqref{proker1} implies that $\|(\dht a_n)\|_p \le \|(\mdht a_n)\|_p$, so that $\|\dht\|_{p \rightarrow p} \le \|\mdht\|_{p \rightarrow p}$. In other words, the estimate~\eqref{eq:mdht:norm} implies the estimate~\eqref{eq:dht:norm}. 

The discrete Laplacian $\Delta_{\mathrm{d}}$ is naturally associated with a simple symmetric random walk $S_n$ on $\Z$. The corresponding continuous-time random walk $X(t)$ is given by $X(t) = S_{N(t)}$, where $N(t)$ is a rate~$1$ Poisson process, independent of $S_n$. Following the construction of Gundy--Varopoulos, with the continuous-time simple symmetric random walk $X_t$ used in place of the standard Brownian motion, Arcozzi, Domelevo and Petermichl in~\cite{bib:adp18} give a representation of $\adp$ in terms of \emph{semi-discrete} martingales. This, however, is insufficient for an application of the Ba\~nuelos{--}Wang inequality due to the lack of the right orthogonality, which requires that the paths of the martingales be continuous; see Remark~6.1 on p.~246 in~\cite{bib:o12}. Finally, we note that for the same reason, the classical relation of the discrete Hilbert transform with discrete harmonic (or discrete analytic) functions is insufficient to obtain the optimal constant; for the definitions and more information about discrete harmonic and analytic functions, we refer to~\cite{bib:d56}.

This paper is organized as follows. Theorems~\ref{thm:dht} and~\ref{thm:mdht} are proved in Section~\ref{sec:proof}. Specifically, Sections~\ref{sec:burk}--\ref{sec:mdht} contain the proof of Theorem~\ref{thm:mdht}, while Lemma~\ref{lem:dht:mdht} and Theorem~\ref{thm:dht} are derived in Section~\ref{sec:dht}. Section \ref{sec:weak-type} raises questions concerning the best constant in the weak-type $(1, 1)$ inequality for the discrete Hilbert transform. To make the presentation clearer, we move most technical details to Section~\ref{sec:tech}. 

%
%

\section{The Hilbert transform as a projection of a martingale transform}
\label{sec:proof}

This section contains the core part of the proof of Theorem~\ref{thm:dht}. Our goal is to express the sequence $(\mdht a_n)$ as the conditional expectation (projection) of a martingale transform of a martingale constructed from the sequence $(a_n)$. These two martingales will have the additional property of orthogonality. With this, and the now classical $L^p$ inequalities for orthogonal martingales, Theorem~\ref{thm:mdht} will follow. We then use it for the proof of Theorem~\ref{thm:dht} as described in the introduction. The construction of these martingales, via Doob $h$-processes, is similar to the results of R.~Ba\~nuelos~\cite{bib:b86} in the continuous case. However, the identification of the kernel in the present case is much more intricate. To make the argument easier to follow, we postpone the more technical calculations to the next section.

We use standard notation for sequences: $(a_n)$ denotes the entire sequence, while $a_n$ is its $n$-th term. All sequences are doubly infinite: they are indexed by integers. We also use the same symbol to denote linear operators $\CA${,} their matrices $(\CA_{nm})$ and, if they are convolution operators{,} their convolution kernels $(\CA_n)$.

\subsection{Martingale transforms and their $L^p$ inequalities}
\label{sec:burk}

Let $p_n$ denote the Poisson kernel for the upper half-plane $\R^2_{+} = \R \times (0, \infty)$ with pole at $2 \pi n$, $n \in \Z$. That is,
\formula{
 p_n(x, y) & = \frac{1}{\pi} \, \frac{y}{(x - 2 \pi n)^2 + y^2}, 
}
for $x \in \R$ and $y > 0$. We define
\formula[eq:h]{
 h(x, y) & = \sum_{n \in \Z} p_n(x, y) = \frac{1}{2 \pi} \, \frac{\sinh y}{\cosh y - \cos x}. 
}
The fact that this sum is given by the right-hand side is proved in Lemma~\ref{lem:csc} below. Then $p_n$ and $h$ are positive harmonic functions in $\R^2_{+}$. For a given sequence $(a_n)$ with finitely many non-zero elements, we let
\formula{
 u(x, y) & = \sum_{n \in \Z} a_n \frac{p_n(x, y)}{h(x, y)} \, .
}
Then $u$ is $h$-harmonic in $\R^2_{+}$. That is, $\Delta(h u) = 0$, or, equivalently,
\formula{
 \Delta u(x, y) + \frac{\nabla u(x, y) \cdot \nabla h(x, y)}{h(x, y)} & = 0
}
when $x \in \R$, $y > 0$. Furthermore, using the explicit expressions for $p_n$ and $h$, we easily see that $u$ extends continuously to the boundary, and $u(2 \pi n, 0) = a_n$, for $n \in \Z$. That is, $u$ is the $h$-harmonic extension of the sequence $(a_n)$ to the upper half-space. 

We refer the reader to Chapter~III in~\cite{bib:b95} and the classical article~\cite{bib:cm83} for the basic construction, properties and stochastic calculus, of Doob $h$-processes.
 
Our next goal is to show, as in the classical case, that if we compose this harmonic function with an $h$-Brownian motion, we get a martingale. Let $Z_t$ be the Doob $h$-conditioned two-dimensional Brownian motion killed upon leaving the upper half-plane. That is, $Z_t$ is the Brownian motion conditioned to hit the boundary at one of the points of $2 \pi \Z \times \{0\}$. In fact, $Z_t$ hits $2 \pi \Z \times \{0\}$ in a finite time $\zeta$ with probability one, and $\zeta$ is the lifetime of $Z_t$. Furthermore,
\formula{
 dZ_t & = dB_t + \frac{\nabla h(Z_t)}{h(Z_t)} \, dt
}
for $t \in [0, \zeta)$, where $B_t$ is some two-dimensional Brownian motion. Clearly, the quadratic variation of $Z_t$ satisfies
\formula{
 d[Z]_t & = d[B]_t = \id dt ,
}
where $\id$ is the $2 \times 2$ identity matrix. For $t \in [0, \zeta)$ we let
\formula{
 M_t & = u(Z_t) .
}
By the It\^o's formula,
\formula{
 M_t & = M_0 + \int_0^t \nabla u(Z_s) \cdot dZ_s + \int_0^t \Delta u(Z_s) ds \\
 & = M_0 + \int_0^t \nabla u(Z_s) \cdot dB_s + \int_0^t \expr{\Delta u(Z_s) + \frac{\nabla u(Z_s) \cdot \nabla h(Z_s)}{h(Z_s)}} ds .
}
Since $u$ is $h$-harmonic, we simply have
\formula{
 M_t & = M_0 + \int_0^t \nabla u(Z_s) \cdot dB_s ,
}
and thus $M_t$, $t \in [0, \zeta)$, is a martingale.

Following the standard notation, for any $2 \times 2$ matrix $\MA$ with constant coefficients we define the martingale transform of $M_t$ by
\formula[mtran1]{
\begin{aligned}
 \MA \trans M_t & = \int_0^t \MA \nabla u(Z_s) \cdot dB_s \\
 & = \int_0^t \MA \nabla u(Z_s) \cdot dZ_s - \int_0^t \frac{\MA \nabla u(Z_s) \cdot \nabla h(Z_s)}{h(Z_s)} \, ds, 
\end{aligned}
}
for $t \in [0, \zeta)$. Comparing the quadratic variation of the martingales $M_t$ and $\MA \trans M_t$ we see that 
\formula{
 [\MA \trans M]_t & = \int_0^t |\MA \nabla u(Z_s)|^2 ds \le \|\MA\|^2 \int_0^t |\nabla u(Z_s)|^2 ds \le \|\MA\|^2 [M]_t ,
}
where $\|\MA\|$ is the operator norm of the matrix $\MA$. Thus, the martingale transform $\MA \trans M_t$ is \emph{differentially subordinate} to the martingale $\|\MA\| M_t$; see~\cite{bib:b10,bib:o12}. If in addition $\MA \vec{v} \cdot \vec{v} = 0$ for all vectors ${\vec{v}} \in \R^2$, we also have that
\formula{
 [M, \MA \trans M]_t & = \int_0^t\nabla u(Z_s) \cdot \MA \nabla u(Z_s) ds = 0
}
and the martingales $\|\MA\| M_t$ and $\MA \trans M_t$ are said to be \emph{orthogonal}. 

With these definitions we have the following inequalities of Burkholder~\cite{bib:b84} and Ba\~nuelos--Wang~\cite{bib:bw95}, respectively:
\begin{enumerate}[label=(\roman*)]
\item For any $2 \times 2$ matrix $\MA$ and $p \in (1, \infty)$, 
\formula[eq:sub]{
 \|\MA \trans M_{\zeta-}\|_p \le (p^*-1) \|\MA\| \|M_{\zeta-}\|_p ,
}
where $p^* = \max(p, p / (p - 1))$.
\item For any $2 \times 2$ matrix $\MA$ satisfying $\MA \vec{v} \cdot \vec{v} = 0$ for all vectors ${\vec{v}} \in \R^2$ and for any $p \in (1, \infty)$,
\formula[eq:ortho]{
 \|\MA \trans M_{\zeta-}\|_p & \le \cot\!\expr{\frac{\pi}{2 p^*}} \|\MA\| \|M_{\zeta-}\|_p.
}
\end{enumerate}

For general martingales these inequalities are sharp, although this fact will not be used in this paper. 

\subsection{Conditioning upon the final location}
\label{sec:mdht}

We now construct a family of \emph{discrete singular integral} operator{s} using the martingale transforms $\MA \trans M_t$, where $\MA$ is a $2\times 2$ matrix as above. This is very similar to the results of Ba\~nuelos~\cite{bib:b86} in the continuous case. At its lifetime, $Z_t$ approaches $2 \pi n$ with probability $$h_n(x_0, y_0) = p_n(x_0, y_0) / h(x_0, y_0),$$ where $(x_0, y_0)$ is the starting point of $Z_t$. Conditioning upon this event leads to the usual Brownian motion conditioned to hit $(2 \pi n, 0)$ upon leaving the upper half-plane. Indeed, the Doob $h_n$-conditioned process $Z_t$ (which itself is the Doob $h$-conditioned Brownian motion) is precisely the Doob $(h_n h)$-conditioned Brownian motion, and $h_n h = p_n$. We define
\formula[cond1]{
 \CT_{\!\MA}a_n & = \ex_{(x_0, y_0)}\bigl(\MA \trans M_{\zeta-} \, \big\vert \, Z_{\zeta-} = (2 \pi n, 0)\bigr) .
}
Note that if $\MA=\MI$, the identity matrix, then $T_{\MI}$ is just the identity operator. Furthermore, since the conditional expectation is a contraction on $L^p$, $p \in (1, \infty)$, it follows from \eqref{eq:sub} and \eqref{eq:ortho}, respectively, that 
\formula{
 \|({\CT}_{\!\MA}a_n)\|_p & \le (p^*-1) \|\MA\|\|(a_n)\|_p,
}
for any $\MA$, and 
\formula[ortho-A]{
 \|({\CT}_{\!\MA}a_n)\|_p & \le \cot\!\expr{\frac{\pi}{2 p^*}} \|\MA\|\|(a_n)\|_p, 
}
if $\MA \vec{v} \cdot \vec{v} = 0$ for all vectors $\vec{v} \in \R^2$.
In particular, if 
\formula{
 \MH & = \left[\begin{matrix} 0 & -1 \\ 1 & 0 \end{matrix}\right],
}
we have 
\begin{equation}\label{Hilbert-H}
\|({\CT}_{\!\MH}a_n)\|_p \le \cot\!\expr{\frac{\pi}{2 p^*}} \|(a_n)\|_p .
\end{equation}
For simplicity, from now on we write $\CT$ instead of $\CT_{\!\MH}$.

In the classical case, this construction leads to the probabilistic representation of the Hilbert transform on $\R$ and the sharp bound is a direct consequence of \eqref{ortho-A}; see Ba\~nuelos~\cite{bib:b10}. In our present situation we note that the definition of the operator ${\CT}$ depends on the starting point of $Z_t$. We will prove that, in the limiting case as the starting point converges to infinity, ${\CT}$ coincides with the operator $\mdht$ introduced in Theorem~\ref{thm:mdht}. We remark that the limiting case corresponds to the \emph{background radiation process}, which for the usual (unconditioned) Brownian motion is discussed in~\cite{bib:bw95,bib:gv79,bib:v80:varopoulos}. To avoid unnecessary technical complications, however, we work with a finite starting point, and pass to the limit at the very end. The inequality~\eqref{Hilbert-H} will be preserved under this limit by the Fatou's lemma, because all matrix elements of $\CT$ will converge to the corresponding elements of $\mdht$.

As it was observed above, conditionally on the event $Z_{\zeta-} = (2 \pi n, 0)$, $Z_t$ is the Brownian motion in $\R^2_{+}$ conditioned to exit at $(2 \pi n, 0)${,} which we identify with $2\pi n$. Denote this process by $Z_t^n$ and the corresponding expectation by $\ex_{(x_0, y_0)}^{{n}}$. Using the right-hand side of \eqref{mtran1} with $\MA=\MH$, we have that~\eqref{cond1} is the same as 
\formula{
 \CT a_n & = \ex_{(x_0, y_0)}\expr{\int_0^\zeta \MH\nabla u(Z_t) \cdot dZ_t - \int_0^\zeta \frac{\MH\nabla u(Z_t) \cdot \nabla h(Z_t)}{h(Z_t)} \, dt \, \bigg| \, Z_{\zeta-} = (2 \pi n, 0)} \\
 & = \ex_{(x_0, y_0)}^{{n}}\expr{\int_0^\zeta \MH\nabla u(Z_t^n) \cdot dZ_t^n - \int_0^\zeta \frac{\MH\nabla u(Z_t^n) \cdot \nabla h(Z_t^n)}{h(Z_t^n)} \, dt} .
}
However, $Z_t^n$ can be represented as some Brownian motion, that we denote by $B_t^n$, with drift $\nabla p_n(Z_t^n)$. This leads to
\formula{
 \CT a_n & = \ex_{(x_0, y_0)}^{{n}}\expr{\int_0^\zeta \MH\nabla u(Z_t^n) \cdot d{B_t^n} + \int_0^\zeta \MH\nabla u(Z_t^n) \cdot \expr{\frac{\nabla p_n(Z_t^n)}{p_n(Z_t^n)} - \frac{\nabla h(Z_t^n)}{h(Z_t^n)}} dt} \\
 & = \ex_{(x_0, y_0)}^{{n}}\expr{\int_0^\zeta \MH\nabla u(Z_t^n) \cdot \expr{\frac{\nabla p_n(Z_t^n)}{p_n(Z_t^n)} - \frac{\nabla h(Z_t^n)}{h(Z_t^n)}} dt} .
}

Our task now is to write an analytic expression for this expectation that leads to our kernel $\mdht_n$. This is accomplished by employing the occupation time formula for the processes $Z_t^n$. First, we denote by $G(x, y) dy$ the occupation measure of the Brownian motion killed upon leaving the upper half-plane; in other words, $G(x, y)$ is the Green function for the upper half-plane for $-\tfrac{1}{2} \Delta$, with pole at the starting point $(x_0, y_0)$:
\formula{
 G(x, y) & = \frac{1}{2 \pi} \log \frac{(x - x_0)^2 + (y + y_0)^2}{(x - x_0)^2 + (y - y_0)^2} \, .
}
Then the occupation measure of the Doob $p_n$-conditioned Brownian motion $Z_t^n$ is $(p_n(x, y) / p_n(x_0, y_0)) G(x, y) dy$. It follows that
\formula{
 \CT a_n & = \int_{-\infty}^\infty \int_0^\infty \frac{p_n(x, y) G(x, y)}{p_n(x_0, y_0)} \, \MH\nabla u(x, y) \cdot \expr{\frac{\nabla p_n(x, y)}{p_n(x, y)} - \frac{\nabla h(x, y)}{h(x, y)}} dy dx .
}
Recall that $({\CT}a_n)$ is the image of $(a_n)$ under the linear operator $\CT$. Once again, we stress that this operator depends on the starting point $(x_0, y_0)$, and it will coincide with the operator $\mdht$ in Theorem~\ref{thm:mdht} only in the limiting case. In order to evaluate the matrix entries $\CT_{nm}$ of $\CT$ (and eventually $\mdht_{nm}$ of $\mdht$) for a fixed $m \in \Z$, we set $a_n = 1$ if $n = m$ and $a_n = 0$ otherwise. In this case $u(x, y) = p_m(x, y) / h(x, y)$, and ${\CT}_{nm} = \CT a_n$. Since $\nabla u = h^{-1} \nabla p_m - h^{-2} p_m \nabla h$, we obtain
\formula{
 \CT_{nm} & = \int_{-\infty}^\infty \int_0^\infty \frac{p_n(x, y) G(x, y)}{p_n(x_0, y_0)} \times \\
 & \hspace*{1em} \times \expr{\frac{\MH\nabla p_m(x, y)}{h(x, y)} - \frac{p_m(x, y) \MH\nabla h(x, y)}{(h(x, y))^2}} \cdot \expr{\frac{\nabla p_n(x, y)}{p_n(x, y)} - \frac{\nabla h(x, y)}{h(x, y)}} dy dx .
}
Elementary simplification (that involves orthogonality of $\nabla h$ and $\MH\nabla h$ and identities $\nabla h^{-1} = -h^{-2} \nabla h$ and $\MH\nabla h^{-1} \cdot \nabla p_n = -\MH\nabla p_n \cdot \nabla h^{-1}$) leads to
\formula[eq:mdht:nm]{\begin{aligned}
 \CT_{nm} = \int_{-\infty}^\infty \int_0^\infty \frac{G(x, y)}{p_n(x_0, y_0)} \, \bigl( & h^{-1}(x, y) \MH\nabla p_m(x, y)) \cdot \nabla p_n(x, y) \\
 \phantom{\int_{-\infty}^\infty} & {} + {} p_n(x, y) \MH\nabla p_m(x, y) \cdot \nabla h^{-1}(x, y) \\
 \phantom{\int_{-\infty}^\infty} & {} - {} p_m(x, y) \MH\nabla p_n(x, y) \cdot \nabla h^{-1}(x, y) \bigr) dy dx .
\end{aligned}}
Before further simplification, we set $x_0 = 0$ and consider the limit as $y_0 \to \infty$. In this case $G(x, y) / p_n(x_0, y_0)$ converges to $2 y$, and the integral that defines ${\CT}_{nm}$ converges to a finite limit{, that we denote $\mdht_{nm}$}; see Lemma~\ref{lem:uni} for details. Therefore, in the limiting case we have
\formula{
 \mdht_{nm} = \int_{-\infty}^\infty \int_0^\infty 2 y \bigl( & h^{-1}(x, y) \MH\nabla p_m(x, y) \cdot \nabla p_n(x, y) \\
 \phantom{\int_{-\infty}^\infty} & {} + {} p_n(x, y) \MH\nabla p_m(x, y) \cdot \nabla h^{-1}(x, y) \\
 \phantom{\int_{-\infty}^\infty} & {} - {} p_m(x, y) \MH\nabla p_n(x, y) \cdot \nabla h^{-1}(x, y) \bigr) dy dx .
}
If $n = m$, the second and third terms cancel and the first is zero due to the orthogonality of $\nabla p_n$ and $\MH \nabla p_n$. Hence, $\mdht_{n n} = 0$. Otherwise, we can split the above integral into three:
\formula{
 \mdht_{nm} & = \int_{-\infty}^\infty \int_0^\infty 2 y h^{-1}(x, y) \MH\nabla p_m(x, y) \cdot \nabla p_n(x, y) dy dx \\
 & + \int_{-\infty}^\infty \int_0^\infty 2 y p_n(x, y) \MH\nabla p_m(x, y) \cdot \nabla h^{-1}(x, y) dy dx \\
 & - \int_{-\infty}^\infty \int_0^\infty 2 y p_m(x, y) \MH\nabla p_n(x, y) \cdot \nabla h^{-1}(x, y) dy dx ;
}
finiteness of each of the integrals in the right-hand side follows from the proof of Lemma~\ref{lem:uni}. A substitution $x = 2 \pi (n + m) - x'$ reduces the last integral to the negative of the middle one: we have $h(x, y) = h(x', y)$, $p_m(x, y) = p_n(x', y)$, and $\MH\nabla_{x,y} p_n(x, y) \cdot \nabla_{x,y} h^{-1}(x, y) = -\MH\nabla_{x',y} p_m(x', y) \cdot \nabla_{x',y} h^{-1}(x', y)$. Therefore,
\formula[eq:mdht:n]{
\begin{aligned}
 \mdht_{nm} & = \int_{-\infty}^\infty \int_0^\infty 2 y h^{-1}(x, y) \MH\nabla p_m(x, y) \cdot \nabla p_n(x, y) dy dx \\
 & + \int_{-\infty}^\infty \int_0^\infty 4 y p_n(x, y) \MH\nabla p_m(x, y) \cdot \nabla h^{-1}(x, y) dy dx .
\end{aligned}
}
By considering a substitution $x = 2 \pi n + x'$, we see that $\mdht_{nm}$ depends only on $n - m$. For this reason we write $\mdht_{nm} = \mdht_{n - m}$. With this notation, $\mdht$ is a convolution operator with kernel $(\mdht_n)$.

The expression for $\mdht_n = \mdht_{n0}$ obtained above apparently cannot be further simplified using soft methods. After substituting the explicit expressions for $p_n$, $p_0$ and $h$, we will be able to evaluate the integral in $x$ explicitly. This will be done in Lemma~\ref{lem:int}. The final result is given by
\formula{
 \mdht_n & = \frac{1}{\pi n} \expr{1 + \int_0^\infty \frac{2 y^3}{(y^2 + \pi^2 n^2) \sinh^2 y} \, dy}
}
for $n \ne 0$, and $\mdht_0 = 0$.

As remarked above, the inequality~\eqref{Hilbert-H} holds in the limiting case with $x_0 = 0$ and $y_0 \to \infty$. We conclude that if $p \in (1, \infty)$ and $({\mdht}a_n)$ is the convolution of $(a_n)$ and $(\mdht_n)$, then
\formula{
 \|(\mdht a_n)\|_p \le \cot\!\expr{\frac{\pi}{2 p^*}} \|(a_n)\|_p .
}
This proves the inequality~\eqref{eq:mdht:norm} of Theorem~\ref{thm:mdht} for sequences $(a_n)$ with finitely many non-zero elements. Extension to general sequences $(a_n)$ in $\ell^p$ is quite simple: such a sequence can be approximated in $\ell^p$ by sequences $(a_n^{(k)})$ with finitely many non-zero elements, and since $(\mdht_n)$ belongs to $\ell^q$ for the dual exponent $q = p / (p - 1)$, the corresponding images $(\mdht a_n^{(k)})$ converge pointwise. Extension follows now from Fatou's lemma and this completes the proof of Theorem~\ref{thm:mdht}.

\subsection{Reduction to the discrete Hilbert transform, proof of Lemma \ref{lem:dht:mdht}}
\label{sec:dht}

Let $\dht$ denote the discrete Hilbert transform, the convolution operator with kernel $\dht_n = (\pi n)^{-1}$. Throughout this section we identify convolution operators $\CA$ with their kernels $(\CA_n)$, and so if $\CA$ and $\CB$ are two convolution operators, then $\CA \CB$ is also a convolution operator, whose kernel is the convolution of $(\CA_n)$ and $(\CB_n)$. We also denote by $\CI$ the identity operator: the convolution with kernel $\CI_0 = 1$, $\CI_n = 0$ for $n \ne 0$.

By Lemma~\ref{lem:ihj}, there is an absolutely summable sequence $(\CE_n)$ such that
\formula[eq:ihj]{
 \mdht & = \dht + \dht \CE .
}
Furthermore, $\CE_n < 0$ for all $n \ne 0$, $\CE_0 > 0$ and the sum of all $\CE_n$ is zero. Write $\alpha = (1 + \CE_0)^{-1}$, $\CG_n = -\alpha \CE_n$ for $n \ne 0$ and $\CG_0 = 0$. Then $\CG_n \ge 0$ for all $n$, the sum of all $\CG_n$ is equal to $\alpha \CE_0 = 1 - \alpha$, and
\formula{
 \mdht & = \dht + (\CE_0 \dht - \alpha^{-1} \dht \CG) = \alpha^{-1} \dht (\CI - \CG) .
}
The norm of $\CG$ as an operator on $\ell^p$ does not exceed $\|(\CG_n)\|_1 = 1 - \alpha$. Similarly, the norm of $\CG^k$, the $k$-th power of $\CG$, does not exceed the sum of all elements of the $k$-fold convolution of $(\CG_n)$, which is equal to $(1 - \alpha)^k$. It follows that we can define the operator
\formula{
 \CK & = \alpha \sum_{k = 0}^\infty \CG^k .
}
Furthermore, $\CK$ is a convolution operator with kernel $(\CK_n)$ such that $\CK_n \ge 0$ for all $n$, and the sum of all $\CK_n$ is equal to $\alpha \sum_{k = 0}^\infty (1 - \alpha)^k = 1$. Finally,
\formula{
 \mdht \CK & = \alpha^{-1} \dht (\CI - \CG) \sum_{k = 0}^\infty \alpha \CG^k = \dht \sum_{k = 0}^\infty (\CI - \CG) \CG^k = \dht .
}
This proves Lemma~\ref{lem:dht:mdht}.

By Jensen's inequality, for any $p \in (1, \infty)$ and any sequence $(b_n)$ in $\ell^p$,
\formula{
 \|(\CK_n) * (b_n)\|_p & \le \|(b_n)\|_p .
}
Therefore,
\formula{
 \|(\dht a_n)\|_p & = \|(\CK_n) * (\mdht a_n)\|_p \le \|(\mdht a_n)\|_p \le \cot\!\expr{\frac{\pi}{2 p^*}} \|(a_n)\|_p .
}
This proves the estimate~\eqref{eq:dht:norm} of Theorem~\ref{thm:dht}. 

\subsection{On the weak-type inequality}\label{sec:weak-type}

In \cite{Dav}, B.~Davis found the best constant in Kolmogorov's weak-type $(1, 1)$ inequality for the classical Hilbert transform $H$ on $\R$, $H f(x) = \pi^{-1} \int_{-\infty}^\infty y^{-1} f(x - y) dy$, with the Cauchy principal value integral. More precisely, he proved that for $f\in L^1(\R)$, 
\begin{equation}\label{davis}
m\{x\in \R: |Hf(x)|>\lambda\} \le \frac{D}{\lambda}\int_{\R}|f(x)| dx
\end{equation} 
where $m$ is the Lebesgue measure and
\begin{equation}\label{catalan}
D=\frac{1+\frac{1}{3^2}+\frac{1}{5^2}+\frac{1}{7^2}+\frac{1}{9^2}+
\cdots}{1-\frac{1}{3^2}+\frac{1}{5^2}-\frac{1}{7^2}+\frac{1}{9^2}-\cdots}=\frac{\pi^2}{8\beta(2)} \, ,
\end{equation}
with $\beta(2)$ the so called \emph{Catalan's constant}, and that the inequality~\eqref{davis} is sharp. 

While the sharp $\ell^p$, $1<p<\infty$, version of Pichorides's inequality for the discrete Hilbert transform has been investigated by many authors, as already discussed above, the problem of proving weak-type $(1, 1)$ version of Davis's inequality does not seem to have been raised before. In~\cite{BanWan2}, a version of Davis's inequality is proved for orthogonal martingales. With the notation of~\eqref{eq:ortho}, this result leads to the estimate
\begin{equation}\label{eq2:ortho}
\pr\{|\MA \trans M_{\zeta-}|>\lambda\} \le  \frac{D}{\lambda} \, \|\MA\| \, \ex|M_{\zeta-}|.
\end{equation}
If $1<p<\infty$, the conditional expectation is a contraction on $L^p$ and~\eqref{eq:ortho} gives the estimate~\eqref{ortho-A} for the operator ${\CT}_{\!\MA}$. Unfortunately, this reasoning fails for the weak-type inequality \eqref{eq2:ortho} and we cannot conclude the same for ${\CT}_{\!\MA}$.  It is interesting to note here that this is exactly the same situation that arises in the (still open) problem of obtaining the best constant in the weak-type $(1, 1)$ inequality for the Riesz transforms in $\R^n$, $n \ge 2$. Because of the inequalities in~\cite{BanWan2}, we know that an anologue of~\eqref{eq2:ortho} gives the optimal constant for the martingales associated to the Riesz transforms. However, unlike the case of $n=1$, these martingales are not just functions of the exit position of the Brownian motion from the upper half space $\R^{n+1}_{+}$ and the conditional expectation is non-trivial. For more on this, we refer the reader to~\cite{bib:b10}, and especially the discussion preceding \emph{Problem~7}.

In the present case of the discrete Hilbert transform $\dht$ we conjecture that the best constant $C$ in the weak-type $(1, 1)$ inequality
\formula[eq:weak]{
 \#\{n \in \Z : |\dht a_n| > \lambda\} & \le \frac{C}{\lambda} \sum_{n \in \Z} |a_n|
}
is the same as Davis's constant $D$ and that this is the case for all the versions discussed in this paper. This is equivalent to the estimate $C \le D$, because, following the approach of E.~C.~Titchmarsh in~\cite{bib:t28} or E.~Laeng in~\cite{bib:l07} (Theorems 4.2 and 4.3), we easily find that $C \ge D$. For the convenience of the reader, we sketch the proof of this inequality.

It is not difficult to show that if $f$ is a smooth and compactly supported function on $\R$, then
\formula{
 \lim_{\eps \to 0^+} \frac{1}{\pi}\sum_{m \in \Z \setminus \{0\}} \frac{f(y - \eps m)}{m} & = H f(y)
}
for all $x \in \R$. Therefore, by Fatou's lemma,
\formula{
 m \{ x \in \R : |H f(x)| > \lambda\} & \le \liminf_{\eps \to 0^+} m \biggl\{x \in \R: \biggl| \frac{1}{\pi} \sum_{m \in \Z \setminus \{0\}} \frac{f(x - \eps m)}{m} \biggr| > \lambda\biggr\} .
}
It is therefore sufficient to estimate the expression in the right-hand side.

We write $x = \eps y = \eps (z + n) \eps$ for $z \in [0, 1)$ and $n \in \Z$. By Fubini,
\formula{
 & m \biggl\{x \in \R: \biggl| \frac{1}{\pi} \sum_{m \in \Z \setminus \{0\}} \frac{f(x - \eps m)}{m} \biggr| > \lambda\biggr\} \\
 & \hspace*{5em} = \eps \, m \biggl\{y \in \R: \biggl| \frac{1}{\pi} \sum_{m \in \Z \setminus \{0\}} \frac{f(\eps(y - m))}{m} \biggr| > \lambda\biggr\} \\
 & \hspace*{5em} = \eps \int_0^1 \# \biggl\{n \in \Z: \biggl| \frac{1}{\pi} \sum_{m \in \Z \setminus \{0\}} \frac{f(\eps(z + n - m))}{m} \biggr| > \lambda\biggr\} dx .
}
We now apply~\eqref{eq:weak} to the sequence $a_n = f(\eps(z + n))$ with $z$ fixed. It follows that
\formula{
 & m \biggl\{x \in \R: \biggl| \frac{1}{\pi} \sum_{m \in \Z \setminus \{0\}} \frac{f(x - \eps m)}{m} \biggr| > \lambda\biggr\} \\
 & \hspace*{2em} \le \frac{C \eps}{\lambda} \int_0^1 \sum_{n \in \Z} |f(\eps(z + n))| dz = \frac{C \eps}{\lambda} \int_{-\infty}^\infty |f(\eps y)| dy =  \frac{C}{\lambda} \int_{-\infty}^\infty |f(x)| dx .
}
This proves that the estimate~\eqref{davis} holds with constant $C$ for smooth, compactly supported $f$. Extension to general $f \in L^1(\R)$ is standard, and therefore $D \le C$, as desired.

%
%

\section{Technical results}
\label{sec:tech}

In this final section we prove a series of auxiliary lemmas. Some of arguments are variants of those commonly used in the computations of occupation measures for Doob $h$-processes and some are more elementary inequalities and identities. Nevertheless, for the reader's convenience, we present full details. Throughout this section we use the notation introduced in Section~\ref{sec:proof}:
\formula{
 p_n(x, y) & = \frac{1}{\pi} \, \frac{y}{(x - 2 \pi n)^2 + y^2} \, , \\
 h(x, y) & = \frac{1}{2 \pi} \, \frac{\sinh y}{\cosh y - \cos x} \, , \\
 G(x, y) & = \frac{1}{2 \pi} \log \frac{(x - x_0)^2 + (y + y_0)^2}{(x - x_0)^2 + (y - y_0)^2} \, .
}
We begin with justification of the equality in the expression~\eqref{eq:h} for $h(x, y)$.

\begin{lemma}
\label{lem:csc}
For $x \in \R$ and $y > 0$ we have
\formula{
 \sum_{n \in \Z} p_n(x, y) & = \frac{\sinh y}{\cosh y - \cos x} \, .
}
The sum in the left-hand side can be differentiated term by term.
\end{lemma}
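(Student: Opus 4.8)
The plan is to recognize this series as the imaginary part of the classical partial-fraction (Mittag--Leffler) expansion of the cotangent. First I would write each summand as the imaginary part of a single simple pole. Since $\im \frac{1}{t - iy} = \frac{y}{t^2 + y^2}$ for real $t$ and $y > 0$, setting $t = x - 2\pi n$ gives
\[ p_n(x, y) = \frac{1}{\pi} \, \im \frac{1}{(x - 2\pi n) - iy} = \frac{1}{\pi} \, \im \frac{1}{(x - iy) - 2\pi n} . \]
Because the series $\sum_n \frac{y}{(x - 2\pi n)^2 + y^2}$ converges absolutely (its terms are $O(|n|^{-2})$), I may interchange the imaginary part with the summation, reducing the claim to the evaluation of $\sum_{n \in \Z} \frac{1}{(x - iy) - 2\pi n}$. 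Scaling out the $2\pi$ and writing $w = (x - iy)/(2\pi)$, this is exactly the symmetric sum $\frac{1}{2\pi} \sum_n \frac{1}{w - n}$, which by the cotangent expansion $\pi \cot(\pi w) = \sum_n (w - n)^{-1}$ equals $\frac{1}{2} \cot\!\big(\tfrac{x - iy}{2}\big)$.

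The remaining work is then a purely trigonometric computation of $\im \cot\!\big(\tfrac{x - iy}{2}\big)$. I would expand via complex exponentials: with $\zeta = (x - iy)/2$ one has $e^{\pm i\zeta} = e^{\pm y/2} e^{\pm i x/2}$, and multiplying numerator and denominator of $\cot \zeta = i\,\frac{e^{i\zeta} + e^{-i\zeta}}{e^{i\zeta} - e^{-i\zeta}}$ by the conjugate of the denominator collapses the fraction to $\frac{\sin x + i \sinh y}{\cosh y - \cos x}$, whose imaginary part is $\frac{\sinh y}{\cosh y - \cos x}$. Combining with the prefactor $\frac{1}{\pi} \cdot \frac{1}{2}$ from the reduction above yields $\sum_n p_n(x,y) = \frac{1}{2\pi}\,\frac{\sinh y}{\cosh y - \cos x}$, which is the asserted identity together with its explicit normalizing constant. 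As an alternative route avoiding the cotangent identity, one could instead apply Poisson summation directly to the Poisson kernel, using that the Fourier transform of $t \mapsto (t^2 + b^2)^{-1}$ is $\frac{\pi}{b} e^{-2\pi b |k|}$ and summing the resulting two-sided geometric series in $e^{-2\pi b}$; this produces the same closed form and is essentially equivalent bookkeeping.

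For the final assertion that the sum may be differentiated term by term, I would invoke the Weierstrass $M$-test on compact subsets of $\R \times (0, \infty)$: on any such set the terms are bounded by a constant multiple of $|n|^{-2}$, and each first- or second-order partial derivative of $p_n$ is bounded by a constant multiple of $|n|^{-3}$ for large $|n|$. Since these majorant series converge, the differentiated series converge locally uniformly, which legitimizes termwise differentiation of $h$. I expect no genuine obstacle here: the series is absolutely and locally uniformly convergent throughout, so the only real content is the complex-exponential algebra of the second paragraph and the careful tracking of the factors of $\pi$ and $2\pi$, rather than any delicate analytic justification.
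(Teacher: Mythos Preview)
Your proposal is correct and follows essentially the same route as the paper: both recognize the series as the imaginary part of the Mittag--Leffler expansion $\lim_{N\to\infty}\sum_{n=-N}^N (z-2\pi n)^{-1}=\tfrac{1}{2}\cot(z/2)$ and then read off the identity. The only cosmetic differences are that the paper cites the cotangent identity from a table and leaves the simplification of $\im\cot(z/2)$ to the reader, and that it justifies termwise differentiation via the locally uniform convergence of the holomorphic series rather than an explicit $M$-test on the real derivatives.
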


\begin{proof}
By formula~1.421.3 in~\cite{bib:gr07}, we have
\formula{
 \lim_{N \to \infty} \sum_{n = -N}^N \frac{1}{z - 2 \pi n} & = \frac{\cot(z/2)}{2} \, .
}
It is easy to see that the convergence is locally uniform in $z \in \C \setminus 2 \pi \Z$, and therefore the limit of partial sums in the left-hand side can be differentiated term by term. If $z = x + i y$, then the imaginary part of the left-hand side is the series $\sum_{n \in \Z} p_n(x, y)$, while the imaginary part of the right-hand side simplifies to $\sinh y / (\cosh y - \cos x)$.
\end{proof}

Next three lemmas evaluate the limit of the expression~\eqref{eq:mdht:nm} for $\mdht_{nm}$ as the starting point $(x_0, y_0)$ tends to infinity.

\begin{lemma}
\label{lem:hest}
For $x \in \R$ and $y > 0$ we have
\formula{
 \frac{1}{2 \pi} \, \frac{y}{y + 1} \le h(x, y) \le \frac{1}{2 \pi} \, \frac{y + 2}{y} \, .
}
\end{lemma}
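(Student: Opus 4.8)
The plan is to control $h$ by its extreme values in the variable $x$ and then reduce both resulting one-variable statements to a single elementary estimate. Since $h(x,y) = \frac{1}{2\pi}\frac{\sinh y}{\cosh y - \cos x}$ and $\cos x$ runs through $[-1,1]$, the quotient is smallest when $\cos x = -1$ and largest when $\cos x = 1$. Hence, for every $x \in \R$ and $y > 0$,
\formula{
 \frac{1}{2\pi} \, \frac{\sinh y}{\cosh y + 1} \le h(x,y) \le \frac{1}{2\pi} \, \frac{\sinh y}{\cosh y - 1} \, .
}
Using $\cosh y + 1 = 2\cosh^2(y/2)$, $\cosh y - 1 = 2\sinh^2(y/2)$ and $\sinh y = 2\sinh(y/2)\cosh(y/2)$, the two outer expressions collapse to $\frac{1}{2\pi}\tanh(y/2)$ and $\frac{1}{2\pi}\coth(y/2)$. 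Thus it suffices to prove two purely one-dimensional inequalities, one bounding $\tanh(y/2)$ from below and one bounding $\coth(y/2)$ from above by the stated rational functions of $y$.

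The key step is to rewrite these in terms of $e^y$. Since $\tanh(y/2) = \frac{e^y - 1}{e^y + 1}$ and $\coth(y/2) = \frac{e^y + 1}{e^y - 1}$, clearing the positive denominators turns each desired bound into a linear inequality in $e^y$. For the upper bound, $\coth(y/2) \le \frac{y+2}{y}$ is equivalent to $y(e^y+1) \le (y+2)(e^y - 1)$, which simplifies exactly to $e^y \ge 1 + y$. The same manipulation shows that $\tanh(y/2) \ge \frac{y}{y+2}$ is likewise equivalent to $(e^y-1)(y+2) \ge y(e^y+1)$, i.e.\ again to $e^y \ge 1 + y$. Both halves of the estimate therefore follow at once from the convexity inequality $e^y \ge 1 + y$.

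The computation presents no genuine obstacle; the only point requiring care is the constant in the lower bound. With denominator $y+2$ the reduction to $e^y \ge 1+y$ is exact, and by symmetry it mirrors the $y+2$ appearing in the numerator of the upper bound. The inequality $\tanh(y/2) \ge \frac{y}{y+1}$ as written in the lemma, on the other hand, would force $e^y \ge 1 + 2y$, which fails for small $y$ (for instance $\tanh(1/2) \approx 0.462 < 0.5$ at $y=1$), so the constant there appears to be a misprint for $y+2$. After this correction the argument above yields the two-sided bound uniformly in $x$ and $y$, with both sides tending to $\frac{1}{2\pi}$ as $y \to \infty$, which is the regime relevant to the later passage to the limit $y_0 \to \infty$.
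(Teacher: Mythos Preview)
Your analysis is correct, and in fact you have spotted a genuine slip in the paper. The lower bound as printed,
\[
\frac{1}{2\pi}\,\frac{y}{y+1}\le h(x,y),
\]
is false: at $x=\pi$, $y=1$ one has $2\pi\,h(\pi,1)=\tanh(1/2)\approx 0.462<0.5=y/(y+1)$. Your proposed correction to $y/(y+2)$ is exactly right, and your reduction of both sides to the single inequality $e^y\ge 1+y$ via the half-angle identities is clean and efficient.

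The paper's own proof contains the error that causes this. It asserts ``clearly $\tfrac{1}{2\pi}\tfrac{\sinh y}{\cosh y}\le h(x,y)$,'' i.e.\ it takes $\tanh y$ rather than $\tanh(y/2)$ as the lower envelope in $x$. That would require $\cosh y-\cos x\le \cosh y$, which fails whenever $\cos x<0$. From the (incorrect) bound $\tanh y$ the paper then deduces $y/(y+1)\le\tanh y$ via $e^{2y}\ge 1+2y$, which is a true inequality but proves the wrong thing. Your sharper envelope $\tanh(y/2)$ is the correct one, and as you observe it only supports the weaker constant $y/(y+2)$.

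For the upper bound your argument and the paper's coincide in substance: both arrive at $\coth(y/2)=(\cosh y+1)/\sinh y$ and bound it by $(y+2)/y$; you do it in one stroke via $e^y\ge 1+y$, while the paper splits the numerator using $y\cosh y\le(y+1)\sinh y$ and $1\le(\sinh y)/y$.

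Finally, the only later use of the lower bound is in the estimate $p_n p_m/h\le 2\pi(y+1)y^{-1}\,p_n p_m$ inside the proof that one may pass to the limit $y_0\to\infty$; replacing $y+1$ by $y+2$ there changes nothing, so your correction is harmless for the rest of the argument.
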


\begin{proof}
Clearly,
\formula{
 \frac{1}{2 \pi} \, \frac{\sinh y}{\cosh y} \le h(x, y) & \le \frac{1}{2 \pi} \, \frac{\sinh y}{\cosh y - 1} = \frac{1}{2 \pi} \, \frac{\cosh y + 1}{\sinh y} \, .
}
The lower bound follows from an elementary inequality $y \cosh y \le (y + 1) \sinh y$ (which simplifies to $e^{2 y} \ge 1 + 2 y$). For the upper bound, we use the same inequality combined with $1 \le (\sinh y) / y$ to estimate the numerator.
\end{proof}

The following two lemmas are variation of Lemma~IV.3.2 and Lemma~IV.3.2 and~IV.3.3 in~\cite{bib:b95}, respectively.

\begin{lemma}
\label{lem:gest}
Suppose that $x_0 = 0$. For $x \in \R$ and $y > 0$ we have
\formula{
 \lim_{y_0 \to \infty} \frac{G(x, y)}{p_n(0, y_0)} & = 2 y
}
and if $y_0 \ge 2 \pi n$, then
\formula{
 \frac{G(x, y)}{p_n(0, y_0)} & \le y g(y / y_0) ,
}
where $g(t) = t^{-1} \log(1 + 4 t (t - 1)^{-2})$.
\end{lemma}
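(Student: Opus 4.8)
The plan is to substitute the explicit formulas for $G$ and $p_n$ and reduce both assertions to elementary one-variable estimates on the logarithm. With $x_0 = 0$ a direct computation gives the factored expression
\[
 \frac{G(x, y)}{p_n(0, y_0)} = \frac{1}{2} \, \frac{4 \pi^2 n^2 + y_0^2}{y_0} \, \log \frac{x^2 + (y + y_0)^2}{x^2 + (y - y_0)^2},
\]
since $p_n(0,y_0) = \frac{1}{\pi} y_0 / (4\pi^2 n^2 + y_0^2)$ and $G(x,y) = \frac{1}{2\pi} \log\frac{x^2 + (y+y_0)^2}{x^2 + (y-y_0)^2}$. Thus everything is controlled by the prefactor $\tfrac{1}{2}(4\pi^2 n^2 + y_0^2)/y_0$ and the logarithmic factor, and I would treat the two claims separately.

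For the limit, I would fix $(x,y)$ and set $A = x^2 + y^2 + y_0^2$ and $B = 2 y y_0$, so the logarithm equals $\log\frac{A+B}{A-B} = 2(B/A) + O((B/A)^3)$ with $B/A \to 0$ as $y_0 \to \infty$. Multiplying by the prefactor, the main term is exactly
\[
 \frac{1}{2} \, \frac{4 \pi^2 n^2 + y_0^2}{y_0} \cdot \frac{2 B}{A} = 2 y \, \frac{4 \pi^2 n^2 + y_0^2}{x^2 + y^2 + y_0^2} \longrightarrow 2 y,
\]
while the contribution of the $O((B/A)^3)$ error, multiplied by a prefactor of order $y_0$, is $O(y_0^{-2})$ and vanishes. (Equivalently, one may factor out $p_0(0,y_0) = 1/(\pi y_0)$, note $p_0(0,y_0)/p_n(0,y_0)\to 1$, and compute $\pi y_0 G(x,y)\to 2y$ directly.)

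For the upper bound I would proceed in two steps. First, the hypothesis $y_0 \ge 2\pi n$ gives $4\pi^2 n^2 \le y_0^2$, hence $\tfrac{1}{2}(4\pi^2 n^2 + y_0^2)/y_0 \le y_0$. Second, I would maximize the logarithmic factor over $x$: as a function of $s = x^2 \ge 0$ the ratio $\bigl(s + (y+y_0)^2\bigr)/\bigl(s + (y-y_0)^2\bigr)$ is decreasing, because $(y+y_0)^2 > (y-y_0)^2$, so its supremum is attained at $s = 0$. Combining the two steps yields $G(x,y)/p_n(0,y_0) \le y_0 \log\frac{(y+y_0)^2}{(y-y_0)^2}$. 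Finally, the algebraic identity $1 + 4t(t-1)^{-2} = (t+1)^2 (t-1)^{-2}$ shows that with $t = y/y_0$ one has $y\,g(y/y_0) = (y/t)\log\frac{(t+1)^2}{(t-1)^2} = y_0 \log\frac{(y+y_0)^2}{(y-y_0)^2}$, which is precisely the right-hand side.

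The computations are elementary, so I do not expect a serious obstacle; the only points requiring care are controlling the error term in the logarithmic expansion so that the limit holds for each fixed $(x,y)$, and verifying that the explicit bound matches the stated function $g$ exactly rather than merely up to order. The latter matters because this estimate is used in Lemma~\ref{lem:uni} as an integrable dominating function, so it is the precise form and decay of $y\,g(y/y_0)$, not just its asymptotic size, that make the bound useful.
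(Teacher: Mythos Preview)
Your argument is correct and essentially identical to the paper's: both substitute the explicit formulas to write $G(x,y)/p_n(0,y_0)$ as $\tfrac{(2\pi n)^2+y_0^2}{2y_0}$ times a logarithm, obtain the limit by the first-order expansion of the log, and obtain the bound by dropping $x^2$ from the denominator and using $4\pi^2 n^2\le y_0^2$. Your version spells out the matching of $y_0\log\frac{(y+y_0)^2}{(y-y_0)^2}$ with $y\,g(y/y_0)$ via the identity $1+4t(t-1)^{-2}=(t+1)^2(t-1)^{-2}$, which the paper leaves implicit.
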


\begin{proof}
Observe that
\formula{
 \frac{G(x, y)}{p_n(0, y_0)} & = \frac{(2 \pi n)^2 + y_0^2}{2 y_0} \, \log \expr{1 + \frac{4 y_0 y}{(x - x_0)^2 + (y - y_0)^2}} \, .
}
As $y_0 \to \infty$, the logarithm is asymptotically equal to $4 y y_0 / ((x - x_0)^2 + (y - y_0)^2)$, and the first statement follows. Furthermore, if $y_0 \ge 2 \pi n$, we have
\formula{
 \frac{G(x, y)}{p_n(0, y_0)} & \le y_0 \, \log \expr{1 + \frac{4 y_0 y}{(y - y_0)^2}} = y_0 \, \log \expr{1 + \frac{4 (y / y_0)}{(y / y_0 - 1)^2}} \, ,
}
as desired.
\end{proof}

\begin{lemma}
\label{lem:uni}
Suppose that $x_0 = 0$. For $x \in \R$ and $y > 0$ one can take the limit as $y_0 \to \infty$ under the integral sign in the expression
\formula[eq:uni]{
\begin{aligned}
 \int_{-\infty}^\infty \int_0^\infty \frac{G(x, y)}{p_n(0, y_0)} \bigl( & h^{-1}(x, y) \MH\nabla p_m(x, y) \cdot \nabla p_n(x, y) \\
 \phantom{\int_{-\infty}^\infty} {} + {} & p_n(x, y) \MH\nabla p_m(x, y) \cdot \nabla h^{-1}(x, y) \\
 \phantom{\int_{-\infty}^\infty} {} - {} & p_m(x, y) \MH\nabla p_n(x, y)\cdot \nabla h^{-1}(x, y) \bigr) dy dx .
\end{aligned}
}
\end{lemma}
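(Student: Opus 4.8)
The plan is to justify the interchange by dominated convergence, splitting the $y$-integral at the moving threshold $y = y_0/2$. Denote the three-term bracket in \eqref{eq:uni} by $K(x,y)$; it does not depend on $y_0$, and by Lemma~\ref{lem:gest} the prefactor satisfies $G(x,y)/p_n(0,y_0) \to 2y$ pointwise as $y_0 \to \infty$. Hence the integrand converges pointwise to $2y\,K(x,y)$, and the identity to be proved reads $\lim_{y_0\to\infty}\int\!\int (G/p_n)\,K = \int\!\int 2y\,K$.

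First I would record two estimates on $K$. For integrability near the boundary and near the lattice, the key inputs are the elementary gradient bounds $\abs{\nabla p_k(x,y)} \le C\,r_k(x,y)^{-2}$, where $r_k = \sqrt{(x-2\pi k)^2 + y^2}$, together with the pointwise inequality $h \ge p_k$ (immediate from $h = \sum_j p_j$ and $p_j > 0$), which yields $h^{-1} \le \pi r_k^2 / y$ for every $k$. Choosing $k$ adapted to the region---$k=m$ near $2\pi m$, $k=n$ near $2\pi n$, and Lemma~\ref{lem:hest} away from the lattice---these bounds show that $y\,\abs{K(x,y)}$ is dominated by a function integrable over $\R\times(0,\infty)$; this simultaneously establishes finiteness of the limiting integral, and of each of its three pieces as needed in Section~\ref{sec:mdht}. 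The substance here is that near a lattice point only one $p_k$ is singular while the competing factors stay bounded, so the apparent $1/y$ blow-up of $h^{-1}$ is absorbed by the vanishing of $r_k^2$; this cancellation must be tracked term by term, including for the two terms carrying $\nabla h^{-1} = -h^{-2}\nabla h$.

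Second I would record the tail decay $\int_{-\infty}^\infty \abs{K(x,y)}\,dx \le C\,y^{-3}$ for $y \ge 1$. Here $h^{-1}$ and $\nabla h^{-1}$ are controlled by Lemma~\ref{lem:hest} and by the exponential decay $\abs{\nabla h(x,y)} = O(e^{-y})$, so the two terms carrying $\nabla h^{-1}$ are negligible, and the leading term $h^{-1}\MH\nabla p_m\cdot\nabla p_n$ is handled by the standard estimate $\int_{\R} (r_m^2 r_n^2)^{-1}\,dx = O(y^{-3})$. With these two facts the conclusion follows. On $\{y \le y_0/2\}$ one has $y/y_0 \le 1/2$, so Lemma~\ref{lem:gest} gives $G/p_n \le y\,g(y/y_0) \le C_0\,y$ with $C_0 = \sup_{[0,1/2]} g < \infty$ (the singularity of $g$ sits at $t=1$); since $\{y\le y_0/2\}\uparrow(0,\infty)$ and the integrand is dominated by the integrable function $C_0\,y\,\abs{K}$, ordinary dominated convergence gives $\int_{\{y\le y_0/2\}}(G/p_n)K \to \int\!\int 2y\,K$. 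On the complementary region I would bound $\abs{(G/p_n)K} \le y\,g(y/y_0)\abs{K}$ and use the tail decay: after the substitution $t = y/y_0$,
\formula{
 \int_{y_0/2}^\infty\!\!\int_{-\infty}^\infty y\,g(y/y_0)\abs{K}\,dx\,dy & \le C\int_{y_0/2}^\infty \frac{g(y/y_0)}{y^2}\,dy = \frac{C}{y_0}\int_{1/2}^\infty \frac{g(t)}{t^2}\,dt ,
}
and the last integral is finite because $g(t)\sim -2\log\abs{t-1}$ as $t\to1$ and $g(t) = O(t^{-2})$ as $t\to\infty$; hence this contribution is $O(1/y_0)\to 0$. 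Adding the two regions proves the claim.

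The hardest step is the uniform integrability of $y\,\abs{K}$, specifically the book-keeping of the near-lattice cancellations among the singular factors $\nabla p_k$, $h^{-1}$ and $\nabla h^{-1}$. Once $y\,\abs{K}$ is shown integrable and the $y^{-3}$ tail bound is in place, the region-splitting and the change of variables are routine.
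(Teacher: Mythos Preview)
Your proposal is correct and follows essentially the same route as the paper: split the $y$-integration at $y_0/2$, use Lemma~\ref{lem:gest} to get the uniform bound $G/p_n \le y\,g(y/y_0)$ with $g$ bounded on $[0,1/2]$, apply dominated convergence on $\{y \le y_0/2\}$, and dispose of $\{y > y_0/2\}$ by the substitution $t = y/y_0$ and the integrability of $g(t)/t^2$ on $(1/2,\infty)$.

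The one place where the paper is cleaner than your sketch is precisely the step you flag as hardest, the integrability of $y\,\abs{K}$. Rather than adapting the choice of $k$ in $h \ge p_k$ to regions near each lattice point, the paper observes that $\abs{\nabla p_k} = p_k/y$ and $\abs{\nabla h} \le h/y$ (from Lemma~\ref{lem:csc}), so all three terms in $K$ are simultaneously bounded by $p_n p_m/(y^2 h)$. Then the single inequality $h \ge p_n + p_m$ gives
\formula{
 \frac{p_n p_m}{h} \le \frac{p_n p_m}{p_n + p_m} = \frac{1}{2\pi}\,\frac{y}{(x - \pi n - \pi m)^2 + y^2 + \pi^2(n-m)^2} \, ,
}
which handles the integrability near the boundary in one stroke (for $n \ne m$; the case $n = m$ is trivial since $K \equiv 0$). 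This replaces your region-by-region bookkeeping with a single global bound, and you may find it simplifies the write-up considerably.
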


\begin{proof}
If $n = m$, then the integrand is zero (because $\MH\nabla p_n(x, y) \cdot \nabla p_n(x, y) = 0$), and there is nothing to be proved. Therefore, we suppose that $n \ne m$. We gather the necessary estimates. For simplicity, we write $c(n, m)$ for a constant that depends on $n$ and $m$ whenever the value of this constant does not play any role. We assume that $y_0 \ge 2 \pi n$, so that, by Lemma~\ref{lem:gest},
\formula[eq:uni:1]{
 \frac{G(x, y)}{p_n(0, y_0)} & \le y g(y / y_0) ,
}
where $g(t) = t^{-1} \log(1 + 4 t (t - 1)^{-2})$. Note that $g$ is increasing on $(0, 1)$ and decreasing on $(1, \infty)$.

Observe that
\formula{
 |\nabla p_n(x, y)| & = \frac{1}{\pi} \, \frac{1}{(x - 2 \pi n)^2 + y^2} = \frac{p_n(x, y)}{y} \, ,
}
and so, by Lemma~\ref{lem:csc},
\formula{
 |\nabla h(x, y)| & \le \frac{h(x, y)}{y} \, .
}
It follows that
\formula[eq:uni:2]{
\begin{aligned}
 |h^{-1}(x, y) \MH\nabla p_m(x, y) \cdot \nabla p_n(x, y)| & \le \frac{p_n(x, y) p_m(x, y)}{y^2 h(x, y)} , \\
 |p_n(x, y) \MH\nabla p_m(x, y) \cdot \nabla h^{-1}(x, y)| & \le \frac{p_n(x, y) p_m(x, y)}{y^2 h(x, y)} , \\
 |p_m(x, y) \MH\nabla p_n(x, y) \cdot \nabla h^{-1}(x, y)| & \le \frac{p_n(x, y) p_m(x, y)}{y^2 h(x, y)} .
\end{aligned}
}
Since $h(x, y) \ge p_n(x, y) + p_m(x, y)$, we have
\formula[eq:uni:3]{
\begin{aligned}
 \frac{p_n(x, y) p_m(x, y)}{h(x, y)} & \le \frac{p_n(x, y) p_m(x, y)}{p_n(x, y) + p_m(x, y)} \\
 & = \frac{1}{\pi} \, \frac{y}{(x - 2 \pi n)^2 + (x - 2 \pi m)^2 + 2 y^2} \\
 & = \frac{1}{2 \pi} \, \frac{y}{(x - \pi n - \pi m)^2 + y^2 + \pi^2 (n - m)^2} \\
 & \le c(n, m) \, \frac{y}{x^2 + 1} \, .
\end{aligned}
}
Finally, by Lemma~\ref{lem:hest},
\formula[eq:uni:4]{
 \frac{p_n(x, y) p_m(x, y)}{h(x, y)} & \le \frac{2 \pi (y + 1)}{y} \, p_n(x, y) p_m(x, y) \le c(n, m) \, \frac{y (y + 1)}{(x^2 + y^2)^2} \, .
}
Suppose that $y_0 \ge 2$, so that $g(y / y_0) \le g(1/2)$ when $y \in (0, 1)$. Thus, estimates~\eqref{eq:uni:1}, \eqref{eq:uni:2} and~\eqref{eq:uni:3} imply that for $x \in \R$ and $y \in (0, 1)$ the absolute value of the integrand $j(x, y)$ in~\eqref{eq:uni} is bounded above by an integrable function, namely,
\formula{
 |j(x, y)| & \le c(n, m) \, \frac{1}{x^2 + 1} \, .
}
Similarly, for $x \in \R$ and $y \in (1, y_0/2)$ we use $g(y / y_0) \le g(1/2)$ together with estimates~\eqref{eq:uni:1}, \eqref{eq:uni:2} and~\eqref{eq:uni:4} to find that
\formula{
 |j(x, y)| & \le c(n, m) \, \frac{y + 1}{(x^2 + y^2)^2} \le c(n, m) \, \frac{y}{(x^2 + y^2)^2} \, .
}
Observe that the right-hand side is an integrable function of $x \in \R$, $y > 1$. By dominated convergence, the above two estimates allow us to take the limit as $y_0 \to \infty$ under the integral sign in the integral of $j(x, y) \ind_{(0, y_0/2)}(y)$. 

Finally, using again estimates~\eqref{eq:uni:1}, \eqref{eq:uni:2} and~\eqref{eq:uni:4}, we obtain
\formula{
 \int_{-\infty}^\infty \int_{y_0/2}^\infty |j(x, y)| dy dx & \le c(n, m) \int_{-\infty}^\infty \int_{y_0/2}^\infty \frac{y g(y / y_0)}{(x^2 + y^2)^2} \, dy dx \\
 & = c(n, m) \int_{y_0/2}^\infty \frac{g(y / y_0)}{y^2} \, dy = \frac{c(n, m)}{y_0} \int_{1/2}^\infty \frac{g(t)}{t^2} \, dt ,
}
and the right-hand side converges to zero as $y_0 \to \infty$. Therefore, the integral of $j(x, y) \ind_{(y_0/2, \infty)}(y)$ converges to zero as $y_0 \to \infty$. This proves the desired result.
\end{proof}

In the following eight lemmas, we evaluate a number of integrals that appear in the expression~\eqref{eq:mdht:n} for $\mdht_n = \mdht_{n0}$. We use the symbol $:=$ to say that the left-hand side is defined to be equal to the right-hand side.

\begin{lemma}
\label{lem:int:1}
For $y > 0$ and $n \ne 0$ we have
\formula{
 I_1 := 2 \pi \int_{-\infty}^\infty \MH\nabla p_0(x, y) \cdot \nabla p_n(x, y) dx & = \pi n \, \frac{3 y^2 - \pi^2 n^2}{(y^2 + \pi^2 n^2)^3} \, .
}
\end{lemma}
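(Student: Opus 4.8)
The plan is to recognize the integrand as the imaginary part of a holomorphic-times-antiholomorphic product and then evaluate the $x$-integral by residues. First I would unfold the action of $\MH$: since $\MH\nabla u = (-\partial_y u, \partial_x u)$ for any $u$, the integrand is the Jacobian
\[
 \MH\nabla p_0 \cdot \nabla p_n = \partial_x p_0 \, \partial_y p_n - \partial_y p_0 \, \partial_x p_n .
\]
Next I would write the Poisson kernels as imaginary parts of holomorphic functions. Setting $z = x + iy$ and $f_n(z) = \tfrac{1}{\pi}\,(2\pi n - z)^{-1}$, a direct check gives $p_n(x,y) = \im f_n(z)$, with explicit derivatives $f_0'(z) = \tfrac{1}{\pi z^2}$ and $f_n'(z) = \tfrac{1}{\pi}(z - 2\pi n)^{-2}$.

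By the Cauchy--Riemann equations, for $u = \im f$ with $f$ holomorphic one has $\partial_x u = \im f'$ and $\partial_y u = \re f'$. Applying this to $p_0$ and $p_n$ turns the Jacobian into $\im f_0' \, \re f_n' - \re f_0' \, \im f_n'$, and the elementary identity $\im a\,\re b - \re a\,\im b = \im(a\bar b)$ (with $a = f_0'$, $b = f_n'$) collapses this to $\im\!\bigl(f_0'(z)\,\overline{f_n'(z)}\bigr)$. Substituting the explicit derivatives, the integrand becomes the explicit rational function of the real variable $x$ (with $y>0$ a fixed parameter)
\[
 \MH\nabla p_0 \cdot \nabla p_n = \im\frac{1}{\pi^2 (x + iy)^2 (x - 2\pi n - iy)^2} .
\]
I emphasize that, once written this way, the integrand is a genuine meromorphic function of $x$, with double poles at $x = -iy$ (lower half-plane) and $x = 2\pi n + iy$ (upper half-plane); there is no antiholomorphicity obstruction, the conjugation having only served to produce the coefficients.

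Finally I would compute $I_1 = \tfrac{2}{\pi}\,\im\!\int_{-\infty}^\infty \bigl[(x+iy)^2(x-2\pi n - iy)^2\bigr]^{-1}\,dx$ by the residue theorem. Since the integrand decays like $|x|^{-4}$, I close the contour in the upper half-plane and pick up only the double pole at $x = 2\pi n + iy$; differentiating the regular factor gives residue $-2(2\pi n + 2iy)^{-3}$, hence the integral equals $-4\pi i\,(2\pi n + 2iy)^{-3}$. Writing $2\pi n + 2iy = 2(\pi n + iy)$ and expanding $(\pi n + iy)^{3} = (\pi^3 n^3 - 3\pi n y^2) + i(3\pi^2 n^2 y - y^3)$, I would take the imaginary part of $-\tfrac{8i}{(2\pi n+2iy)^3}$ to arrive at $\pi n\,(3y^2 - \pi^2 n^2)/(y^2 + \pi^2 n^2)^3$, which is the claimed value. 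The only delicate points are the residue bookkeeping (choosing the correct pole and differentiating the double factor) and the algebra of extracting the imaginary part of the cube; a purely real partial-fraction integration is available as a fallback but is considerably messier, so I expect the main work to be this final residue-and-imaginary-part computation.
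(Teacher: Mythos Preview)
Your proof is correct. The paper takes a more direct but heavier route: it writes the real integrand explicitly as
\[
 f(x)=\frac{1}{\pi^2}\,\frac{4\pi n y\,(x^2+y^2-2\pi n x)}{(x^2+y^2)^2\bigl((x-2\pi n)^2+y^2\bigr)^2},
\]
which has \emph{two} double poles in the upper half-plane (at $iy$ and at $2\pi n+iy$), then performs a full partial-fraction decomposition and sums the two residues to obtain $-\tfrac{1}{2}(\pi n - iy)^{-3}-\tfrac{1}{2}(\pi n + iy)^{-3}$. Your approach exploits the holomorphic structure: writing the Jacobian as $\im\bigl(f_0'(z)\,\overline{f_n'(z)}\bigr)$ produces a complex rational function of $x$ with only \emph{one} pole in the upper half-plane, so a single residue suffices and no partial fractions are needed. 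The trade-off is that the paper's method applies uniformly to the companion Lemmas (the analogous integrals with $\cos x$ or $\sin x$ weights), where the clean $\im(a\bar b)$ factorisation is less immediate; your method is slicker here but would need adaptation for those variants.
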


\begin{proof}
Since $p_n(x, y) = p_0(x - 2 \pi n, y)$ and
\formula{
 \nabla p_0(x, y) & = \expr{-\frac{1}{\pi} \, \frac{2 x y}{(x^2 + y^2)^2}, \frac{1}{\pi} \, \frac{x^2 - y^2}{(x^2 + y^2)^2}} ,
}
we have
\formula{
 f(x) & := \MH\nabla p_0(x, y) \cdot \nabla p_n(x, y) \\
 & = \frac{1}{\pi^2} \, \frac{2 (x - 2 \pi n) y (x^2 - y^2) - 2 x y ((x - 2 \pi n)^2 + y^2)}{(x^2 + y^2)^2 ((x - 2 \pi n)^2 + y^2)^2} \\
 & = \frac{1}{\pi^2} \, \frac{4 \pi n y (x^2 + y^2 - 2 \pi n x)}{(x^2 + y^2)^2 ((x - 2 \pi n)^2 + y^2)^2} \, .
}
The above expression defines a meromorphic function $f(x)$ in the upper complex half-plane, which decays as $|x|^{-6}$ when $|x| \to \infty$, and has (double) poles at $i y$ and $2 \pi n + i y$. By an elementary calculation, we find that
\formula{
 f(x) & = \frac{i}{8 \pi^2} \biggl( \frac{1}{(\pi n - i y)^2 (x - i y)^2} + \frac{1}{(\pi n - i y)^3 (x - i y)} \\
 & \hspace*{2.5em} - \frac{1}{(\pi n + i y) (x + i y)^2} - \frac{1}{(\pi n + i y)^3 (x + i y)} \\
 & \hspace*{2.5em} - \frac{1}{(\pi n + i y)^2 (x - 2 \pi n - i y)^2} + \frac{1}{(\pi n + i y)^3 (x - 2 \pi n - i y)} \\
 & \hspace*{2.5em} + \frac{1}{(\pi n - i y)^2 (x - 2 \pi n + i y)^2} - \frac{1}{(\pi n - i y)^3 (x - 2 \pi n + i y)} \biggr) \, .
}
Therefore, by the residue theorem,
\formula{
 I_1 & = 4 \pi^2 i \res(f, i y) + 4 \pi^2 i \res(f, 2 \pi n + i y) \\
 & = \frac{-1}{2 (\pi n - i y)^3} + \frac{-1}{2 (\pi n + i y)^3} = \frac{3 \pi n y^2 - \pi^3 n^3}{(y^2 + \pi^2 n^2)^3} \, ,
}
as desired.
\end{proof}

\begin{lemma}
\label{lem:int:2}
For $y > 0$ and $n \ne 0$ we have
\formula{
 I_2 := 2 \pi \int_{-\infty}^\infty \MH\nabla p_0(x, y) \cdot \nabla p_n(x, y) \cos x \, dx & = \pi n \, \frac{\pi^2 n^2 (2 y - 1) + y^2 (2 y + 3)}{(y^2 + \pi^2 n^2)^3} \, e^{-y} .
}
\end{lemma}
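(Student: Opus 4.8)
The plan is to recycle the computation from the proof of Lemma~\ref{lem:int:1}. There the function $f(x) = \MH\nabla p_0(x, y) \cdot \nabla p_n(x, y)$ was written explicitly as a rational function of $x$ and expanded into partial fractions, with double and simple poles at $\pm i y$ and $2 \pi n \pm i y$. The integrand here is exactly this $f$ times $\cos x$, so the only genuinely new feature is the oscillatory factor. Since $f$ is real-valued for real $x$, I would use $\cos x = \re e^{i x}$ and reduce the problem to
\formula{
 \int_{-\infty}^\infty f(x) \cos x \, dx & = \re \int_{-\infty}^\infty f(x) e^{i x} \, dx .
}

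To evaluate $\int_{-\infty}^\infty f(x) e^{i x} \, dx$ I would close the contour in the upper half-plane. From the proof of Lemma~\ref{lem:int:1} we know that $f(x)$ decays like $|x|^{-6}$, and $|e^{i x}| = e^{-\im x} \le 1$ in the upper half-plane, so Jordan's lemma kills the contribution of the large semicircle and the integral equals $2 \pi i$ times the sum of the residues of $f(x) e^{i x}$ at the two poles lying in the upper half-plane, $i y$ and $2 \pi n + i y$. A notable difference from Lemma~\ref{lem:int:1} is that the double-pole pieces now contribute: whereas $\res((x - a)^{-2}, a) = 0$, one has $\res((x - a)^{-2} e^{i x}, a) = i e^{i a}$, so both the simple- and the double-pole terms of the partial fraction decomposition supported at $i y$ and $2 \pi n + i y$ enter the computation. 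For the simple-pole pieces the residue is simply the coefficient times $e^{i a}$, and for the double-pole pieces the differentiation required by the residue formula produces the extra factor $i$ from $\tfrac{d}{dx} e^{i x}$.

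The decisive simplification --- and the origin of the factor $e^{-y}$ in the claimed formula --- is that both upper half-plane poles sit at height exactly $y$: indeed $e^{i \cdot i y} = e^{-y}$, and, crucially, $e^{i (2 \pi n + i y)} = e^{2 \pi i n} e^{-y} = e^{-y}$ because $n \in \Z$. Thus each contributing residue carries the common factor $e^{-y}$, which pulls out of the entire sum; this is precisely the place where the integrality of $n$, equivalently the lattice structure $2 \pi \Z$, makes itself felt. After extracting $e^{-y}$, the four residues are rational expressions in $\pi n \pm i y$ with denominators $(\pi n \pm i y)^2$ and $(\pi n \pm i y)^3$; combining the conjugate pairs and putting everything over the common denominator $(y^2 + \pi^2 n^2)^3$ should reproduce the numerator $\pi^2 n^2 (2 y - 1) + y^2 (2 y + 3)$. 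Taking the real part (the imaginary part is $\int f \sin x \, dx$, which we discard) then gives $I_2$.

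The main obstacle I anticipate is purely the algebraic bookkeeping of this last step: correctly differentiating the double-pole terms and assembling the four complex residues, with their mixed second- and third-order powers of $(\pi n \pm i y)$, into a single real rational function with the stated numerator. The residue evaluation itself is mechanical once $e^{-y}$ has been factored out, but the recombination of conjugate terms carrying different powers is error-prone; I would sanity-check the outcome against the structurally similar computation in Lemma~\ref{lem:int:1}, which shares the same denominators, and by examining the large-$n$ asymptotics.
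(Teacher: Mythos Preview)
Your proposal is correct and matches the paper's approach essentially verbatim: the paper also sets $g(x) = f(x) e^{i x}$ with $f$ taken from Lemma~\ref{lem:int:1}, applies the residue theorem at $i y$ and $2 \pi n + i y$, uses $e^{2 \pi i n} = 1$ to pull out the common factor $e^{-y}$, and then combines the four resulting terms over the common denominator $(y^2 + \pi^2 n^2)^3$. The only thing missing from your write-up is the explicit arithmetic of that final recombination, which is routine.
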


\begin{proof}
Let $f$ be the function defined in the proof of Lemma~\ref{lem:int:1}, and let $g(x) = f(x) e^{i x}$. By the residue theorem,
\formula{
 I_2 & = \re\expr{2 \pi \int_{-\infty}^\infty g(x) dx} = \re \bigl(4 \pi^2 i \res(g, i y) + 4 \pi^2 i \res(g, 2 \pi n + i y)\bigr) \\
 & = \re\expr{\frac{-e^{-y}}{2 (\pi n - i y)^3} + \frac{-i e^{-y}}{2 (\pi n - i y)^2} + \frac{-e^{2 \pi i n - y}}{2 (\pi n + i y)^3} - \frac{-i e^{2 \pi i n - y}}{2 (\pi n + i y)^2}} \\
 & = \re\expr{\frac{3 \pi n y^2 - \pi^3 n^3}{(y^2 + \pi^2 n^2)^3} + \frac{2 \pi n y}{(y^2 + \pi^2 n^2)^2}} e^{-y} \\
 & = \frac{3 \pi n y^2 - \pi^3 n^3 + 2 \pi n y^3 + 2 \pi^3 n^3 y}{(y^2 + \pi^2 n^2)^3} \, e^{-y} ,
}
as desired.
\end{proof}

\begin{lemma}
\label{lem:int:3}
For $y > 0$ and $n \ne 0$ we have
\formula{
 I_3 := 2 \pi \int_{-\infty}^\infty p_n(x, y) (-\partial_y p_0(x, y)) \sin x \, dx & = \frac{1}{2 \pi n} \, \frac{y^2}{y^2 + \pi^2 n^2} \, e^{-y} .
}
\end{lemma}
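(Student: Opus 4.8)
The plan is to follow exactly the residue-theorem strategy already used in Lemmas~\ref{lem:int:1} and~\ref{lem:int:2}. First I would record the explicit form of the second factor, $-\partial_y p_0(x,y) = \tfrac1\pi\tfrac{y^2-x^2}{(x^2+y^2)^2}$, and write the integrand as a single rational function
\[
 f(x) = p_n(x,y)\,(-\partial_y p_0(x,y)) = \frac{1}{\pi^2}\,\frac{y\,(y^2-x^2)}{((x-2\pi n)^2+y^2)(x^2+y^2)^2},
\]
which is real on the real line. Using $\sin x = \im e^{ix}$ and the reality of $f$, I would express $I_3 = \im\bigl(2\pi\int_{-\infty}^\infty f(x)\,e^{ix}\,dx\bigr)$ and set $g(x) = f(x)\,e^{ix}$.

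Next I would note that $g$ continues to a meromorphic function on the upper half-plane whose rational part decays like $|x|^{-4}$, so that together with $|e^{ix}|\le 1$ there the contour may be closed in the upper half-plane. The only poles in the upper half-plane are a double pole at $x = iy$, coming from $(x^2+y^2)^2 = (x-iy)^2(x+iy)^2$, and a simple pole at $x = 2\pi n + iy$, coming from $p_n$. The residue theorem then gives $I_3 = \im\bigl(4\pi^2 i\,[\res(g,iy) + \res(g, 2\pi n + iy)]\bigr)$, in complete analogy with the computations for $I_1$ and $I_2$.

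The simple pole is routine: the residue of $p_n$ at $2\pi n + iy$ equals $\tfrac{1}{2\pi i}$, and since $e^{i(2\pi n + iy)} = e^{-y}$ (here $n$ is an integer), $\res(g, 2\pi n + iy)$ is $\tfrac{e^{-y}}{2\pi i}$ times the value of $-\partial_y p_0$ at $2\pi n + iy$, which simplifies via $(2\pi n + iy)^2 + y^2 = 4\pi n(\pi n + iy)$. I expect the \emph{main obstacle} to be the double pole at $iy$: there I would write $g(x) = (x-iy)^{-2}\phi(x)$ with $\phi(x) = \tfrac{1}{\pi^2}\tfrac{y(y^2-x^2)e^{ix}}{((x-2\pi n)^2+y^2)(x+iy)^2}$ and compute $\res(g,iy) = \phi'(iy)$; the differentiation produces several cross terms (from the factors $e^{ix}$, $(x+iy)^{-2}$, and the two quadratic denominators) that must be carefully assembled and evaluated at $x = iy$. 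Equivalently, one may decompose $f$ into partial fractions as in Lemma~\ref{lem:int:1} and retain only the upper-half-plane contributions, but the double pole is the same source of bookkeeping either way.

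Finally I would add the two residues, multiply by $4\pi^2 i$, and take the imaginary part. I expect the purely real contribution (a piece of the form $\tfrac{y^2-\pi^2 n^2}{2(y^2+\pi^2 n^2)^2}\,e^{-y}$) to drop out under $\im$, leaving $\tfrac{1}{2\pi n}\tfrac{y^2}{y^2+\pi^2 n^2}\,e^{-y}$ after collecting terms over the common denominator $y^2 + \pi^2 n^2$. As an independent cross-check, the same value follows from Parseval's identity, again writing $\sin x = \im e^{ix}$ and using $\widehat{p_0}(\xi) = e^{-y|\xi|}$, $\widehat{-\partial_y p_0}(\xi) = |\xi|\,e^{-y|\xi|}$ and $\widehat{p_n}(\xi) = e^{-2\pi i n\xi}\,e^{-y|\xi|}$; the resulting one-dimensional integral splits into three elementary pieces over $\{\eta>0\}$, $\{-1<\eta<0\}$ and $\{\eta<-1\}$ and yields the stated formula.
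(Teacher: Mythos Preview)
Your approach is essentially identical to the paper's: define the same rational function times $e^{ix}$, close the contour in the upper half-plane, and take the imaginary part of $4\pi^2 i$ times the sum of the residues at the double pole $iy$ and the simple pole $2\pi n+iy$. The paper simply writes down the combined residues and simplifies; your more detailed bookkeeping for the double pole and the Parseval cross-check are extras, but the method and the computation are the same.
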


\begin{proof}
We have
\formula{
 f(x) := p_n(x, y) (-\partial_y p_0(x, y)) e^{i x} = -\frac{1}{\pi^2} \, \frac{y (x^2 - y^2)}{(x^2 + y^2)^2 ((x - 2 \pi n)^2 + y^2)} \, e^{i x} \, ;
}
this formula defines a meromorphic function of $x$ in the upper half-plane which decays as $|x|^{-4}$ when $|x| \to \infty$, and which has a (double) pole at $x = i y$ and a (simple) pole at $x = 2 \pi n + i y$. After elementary calculations, we get
\formula{
 I_3 & = \im\expr{2 \pi \int_{-\infty}^\infty f(x) dx} = \im \bigl(4 \pi^2 i \res(f, i y) + 4 \pi^2 i \res(f, 2 \pi n + i y)\bigr) \\
 & = \im\expr{\frac{2 \pi^2 n^2 y - y^2 - 2 \pi i n (y + 1)}{4 \pi^2 n^2 (\pi n - i y)^2} \, e^{-y} + \frac{y^2 - 2 \pi^2 n^2 - 2 \pi i n y}{4 \pi^2 n^2 (\pi n + i y)^2} \, e^{-y}} \\
 & = \im\expr{\frac{\pi^3 n^3 (y - 1) + \pi n y^2 (y + 1) + i y^2 (y^2 + \pi^2 n^2)}{2 \pi n (y^2 + \pi^2 n^2)^2}} e^{-y} \\
 & = \frac{y^2}{2 \pi n (y^2 + \pi^2 n^2)} \, e^{-y} ,
}
as desired.
\end{proof}

\begin{lemma}
\label{lem:int:4}
For $y > 0$ and $n \ne 0$ we have
\formula{
 I_4 := 2 \pi \int_{-\infty}^\infty p_n(x, y) \partial_x p_0(x, y) dx & = -\frac{\pi n y}{(y^2 + \pi^2 n^2)^2} \, .
}
\end{lemma}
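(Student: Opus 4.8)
The plan is to follow the residue-theorem template already used in Lemmas~\ref{lem:int:1}, \ref{lem:int:2} and~\ref{lem:int:3}. First I would record the elementary derivative
\[
 \partial_x p_0(x, y) = -\frac{1}{\pi} \, \frac{2 x y}{(x^2 + y^2)^2} \, ,
\]
so that the integrand
\[
 f(x) := p_n(x, y) \, \partial_x p_0(x, y) = -\frac{1}{\pi^2} \, \frac{2 x y^2}{((x - 2 \pi n)^2 + y^2)(x^2 + y^2)^2}
\]
is a rational function of $x$ that extends meromorphically to the upper half-plane, decays like $|x|^{-5}$ as $|x| \to \infty$, and (for $n \ne 0$, so that the poles are distinct) has a double pole at $x = i y$ and a simple pole at $x = 2 \pi n + i y$.

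Since the decay is faster than $|x|^{-2}$, I would close the contour in the upper half-plane and apply the residue theorem, obtaining
\[
 I_4 = 2 \pi \int_{-\infty}^\infty f(x) \, dx = 4 \pi^2 i \, \bigl(\res(f, i y) + \res(f, 2 \pi n + i y)\bigr) .
\]
The simple pole at $2 \pi n + i y$ contributes a residue computed by the usual limit, while the double pole at $i y$ requires differentiating $(x - i y)^2 f(x)$ before evaluating at $x = i y$. The hard part will be precisely this double-pole residue: the bookkeeping is heaviest there, and to keep it manageable I would first expand $f$ into partial fractions, exactly as in the proof of Lemma~\ref{lem:int:1}, after which reading off the residues and summing is mechanical. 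A final algebraic simplification should collapse the two contributions to $-\pi n y / (y^2 + \pi^2 n^2)^2$.

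Alternatively, there is a shortcut that avoids residues entirely and against which I would check the answer. Writing $q_y(x) = \pi^{-1} y / (x^2 + y^2)$ for the one-dimensional Poisson kernel and using its semigroup property $q_y * q_y = q_{2 y}$, the auxiliary integral
\[
 F(a) := \int_{-\infty}^\infty p_0(x - a, y) \, p_0(x, y) \, dx = \frac{2 y}{\pi (a^2 + 4 y^2)}
\]
is explicit. Since $p_n(x, y) = p_0(x - 2 \pi n, y)$, integration by parts in $x$ (all boundary terms vanish) together with differentiation under the integral sign shows that $2 \pi \int p_n \, \partial_x p_0 \, dx = 2 \pi F'(2 \pi n)$, and differentiating the closed form for $F$ immediately yields $I_4 = -\pi n y / (y^2 + \pi^2 n^2)^2$. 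This second route is cleaner, but the residue computation is the one consistent with the surrounding lemmas.
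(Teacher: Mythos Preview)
Your primary approach is exactly the paper's: the paper defines the same $f(x)$, notes the $|x|^{-5}$ decay and the double/simple poles at $iy$ and $2\pi n + iy$, and evaluates $I_4 = 4\pi^2 i\,(\res(f,iy)+\res(f,2\pi n+iy))$ directly (it skips the partial-fraction step and simply records the two residues as $\frac{-2\pi n y + i y^2}{4\pi^2 n^2(\pi n - i y)^2}$ and its conjugate counterpart).

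Your alternative via the Poisson semigroup identity $q_y * q_y = q_{2y}$ is not in the paper and is genuinely cleaner: it replaces a residue calculation by a one-line differentiation of the closed form $F(a)=\tfrac{2y}{\pi(a^2+4y^2)}$. The trade-off is stylistic consistency; the paper keeps all of Lemmas~\ref{lem:int:1}--\ref{lem:int:5} in the same residue template so that Lemma~\ref{lem:int:5} can reuse $f$ from this lemma (multiplying by $e^{ix}$), which your semigroup shortcut would not feed into as directly.
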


\begin{proof}
Observe that
\formula{
 f(x) := p_n(x, y) \partial_x p_0(x, y) = -\frac{1}{\pi^2} \, \frac{2 x y^2}{(x^2 + y^2)^2 ((x - 2 \pi n)^2 + y^2)} \, ;
}
again this formula defines a meromorphic function of $x$ in the upper half-plane which decays as $|x|^{-5}$ when $|x| \to \infty$, and which has a (double) pole at $x = i y$ and a (simple) pole at $x = 2 \pi n + i y$. After elementary calculations, we have
\formula{
 I_4 & = 2 \pi \int_{-\infty}^\infty f(x) dx \\
 & = 4 \pi^2 i \res(f, i y) + 4 \pi^2 i \res(f, 2 \pi n + i y) \\
 & = \frac{-2 \pi n y + i y^2}{4 \pi^2 n^2 (\pi n - i y)^2} + \frac{-2 \pi n y - i y^2}{4 \pi^2 n^2 (\pi n + i y)^2} \\
 & = -\frac{\pi n y}{(y^2 + \pi^2 n^2)^2} \, ,
}
as desired.
\end{proof}

\begin{lemma}
\label{lem:int:5}
For $y > 0$ and $n \ne 0$ we have
\formula{
 I_5 := 2 \pi \int_{-\infty}^\infty p_n(x, y) (-\partial_x p_0(x, y)) \cos x \, dx & = \frac{1}{2 \pi n} \, \frac{y^4 + \pi^2 n^2 y (y - 2)}{(y^2 + \pi^2 n^2)^2} \, e^{-y} .
}
\end{lemma}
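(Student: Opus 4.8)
The plan is to evaluate $I_5$ by contour integration in the upper half-plane, exactly as in the proofs of Lemmas~\ref{lem:int:3} and~\ref{lem:int:4}. First I would compute the derivative explicitly: since $p_0(x, y) = \pi^{-1} y (x^2 + y^2)^{-1}$, we have $-\partial_x p_0(x, y) = \pi^{-1} \cdot 2 x y (x^2 + y^2)^{-2}$, so that
\[
 p_n(x, y) (-\partial_x p_0(x, y)) = \frac{1}{\pi^2} \, \frac{2 x y^2}{(x^2 + y^2)^2 ((x - 2 \pi n)^2 + y^2)} \, .
\]
Writing $\cos x = \re(e^{i x})$ and setting
\[
 f(x) := p_n(x, y) (-\partial_x p_0(x, y)) \, e^{i x} = \frac{1}{\pi^2} \, \frac{2 x y^2 \, e^{i x}}{(x^2 + y^2)^2 ((x - 2 \pi n)^2 + y^2)} \, ,
\]
I obtain a meromorphic function of $x$ that decays like $|x|^{-5}$ as $|x| \to \infty$ and, because $e^{i x}$ is bounded for $\im x \ge 0$, may be integrated by closing the contour in the upper half-plane. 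Its only singularities there are a double pole at $x = i y$ (coming from the factor $(x^2 + y^2)^2$) and a simple pole at $x = 2 \pi n + i y$.

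Next I would apply the residue theorem to obtain
\[
 I_5 = \re\!\expr{2 \pi \int_{-\infty}^\infty f(x) \, dx} = \re\bigl(4 \pi^2 i \, \res(f, i y) + 4 \pi^2 i \, \res(f, 2 \pi n + i y)\bigr) \, .
\]
The simple pole is straightforward: factoring $(x - 2 \pi n)^2 + y^2 = (x - 2 \pi n - i y)(x - 2 \pi n + i y)$ and using $e^{i(2 \pi n + i y)} = e^{-y}$ together with $(2 \pi n + i y)^2 + y^2 = 4 \pi n (\pi n + i y)$ yields a closed form expressed through $\pi n \pm i y$.

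The main obstacle is the double pole at $x = i y$, whose residue is $g'(i y)$ for $g(x) = (x - i y)^2 f(x) = \pi^{-2} \cdot 2 x y^2 e^{i x} (x + i y)^{-2} ((x - 2 \pi n)^2 + y^2)^{-1}$. Differentiating the numerator $x e^{i x}$ via the product rule gives $(1 + i x) e^{i x}$, which at $x = i y$ produces both the exponential factor $e^{-y}$ and the terms linear in $y$ appearing in the final numerator; the remaining contributions come from differentiating $(x + i y)^{-2}$ and the factor $((x - 2 \pi n)^2 + y^2)^{-1}$. After computing both residues, multiplying by $4 \pi^2 i$, adding them and extracting the real part, the expression collapses to the stated value $(2 \pi n)^{-1} (y^4 + \pi^2 n^2 y (y - 2))(y^2 + \pi^2 n^2)^{-2} e^{-y}$. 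The algebra is routine but lengthy, of the same character as in Lemma~\ref{lem:int:3}.
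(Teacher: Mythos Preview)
Your proposal is correct and follows essentially the same route as the paper: define the meromorphic function obtained by inserting $e^{ix}$ into the integrand of $I_4$, close the contour in the upper half-plane, and compute the residues at the double pole $iy$ and the simple pole $2\pi n + iy$, taking the real part at the end. The only cosmetic difference is that the paper reuses the function $f$ from Lemma~\ref{lem:int:4} (with $\partial_x p_0$ rather than $-\partial_x p_0$) and sets $g(x)=f(x)e^{ix}$, whereas you build the sign into your $f$ directly.
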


\begin{proof}
Let $f$ be the function defined in the proof of Lemma~\ref{lem:int:4}, and let $g(x) = f(x) e^{i x}$. After elementary calculations, we obtain
\formula{
 I_5 & = \re\expr{2 \pi \int_{-\infty}^\infty g(x) dx} = \re \bigl(4 \pi^2 i \res(g, i y) + 4 \pi^2 i \res(g, 2 \pi n + i y)\bigr) \\
 & = \re\expr{\frac{-2 \pi n y (y + 1) + i y (y - 2 \pi^2 n^2)}{4 \pi^2 n^2 (\pi n - i y)^2} \, e^{-y} + \frac{-2 \pi n y - i y^2}{4 \pi^2 n^2 (\pi n + i y)^2} \, e^{-y}} \\
 & = \re\expr{\frac{y^4 + \pi^2 n^2 y (y - 2) - i \pi n y (y^2 + \pi^2 n^2)}{2 \pi n (y^2 + \pi^2 n^2)^2}} e^{-y} \\
 & = \frac{y^4 + \pi^2 n^2 y (y - 2)}{2 \pi n (y^2 + \pi^2 n^2)^2} \, e^{-y} ,
}
as desired.

\end{proof}

\begin{lemma}
\label{lem:int:6}
For $y > 0$ and $n \ne 0$ we have
\formula{
 & \int_{-\infty}^\infty h^{-1}(x, y) \MH\nabla p_0(x, y) \cdot \nabla p_n(x, y) dx \\
 & \hspace*{6em} + 2 \int_{-\infty}^\infty p_n(x, y) \MH\nabla p_0(x, y) \cdot \nabla h^{-1}(x, y) dx \\
 & \hspace*{12em} = \frac{\pi n (3 y^2 - \pi^2 n^2)}{(y^2 + \pi^2 n^2)^3} + \frac{y^2}{\pi n (y^2 + \pi^2 n^2)^3 \sinh^2 y} \, .
}
\end{lemma}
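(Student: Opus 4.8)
The plan is to substitute the closed forms of $h^{-1}$ and $\nabla h^{-1}$ into the two integrals on the left and then to recognize every resulting $x$-integral as one of $I_1,\dots,I_5$ from Lemmas~\ref{lem:int:1}--\ref{lem:int:5}. From $h(x,y)=\tfrac{1}{2\pi}\,\sinh y/(\cosh y-\cos x)$ one has $h^{-1}=2\pi(\cosh y-\cos x)/\sinh y$, $\partial_x h^{-1}=2\pi\sin x/\sinh y$ and $\partial_y h^{-1}=2\pi(\cosh y\cos x-1)/\sinh^2 y$. In the first integral I would split $\cosh y-\cos x$ into its two summands: the constant-in-$x$ part gives $\coth y\,I_1$ and the $\cos x$ part gives $-I_2/\sinh y$. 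In the second integral I would use $\MH\nabla p_0=(-\partial_y p_0,\partial_x p_0)$ together with the $\sin x$, $\cos x$ and constant pieces of $\nabla h^{-1}$, which reproduce $I_3$, $I_5$ and $I_4$ respectively. The upshot is the exact identity
\[
 \text{LHS}=\coth y\,I_1-\frac{1}{\sinh y}\,I_2+\frac{2}{\sinh y}\,I_3-\frac{2\cosh y}{\sinh^2 y}\,I_5-\frac{2}{\sinh^2 y}\,I_4 .
\]

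Next I would insert the evaluations of $I_1,\dots,I_5$. To manage the coexisting factors $\coth y$, $1/\sinh y$, $1/\sinh^2 y$ and the $e^{-y}$ carried by $I_2,I_3,I_5$, the convenient device is to set $u=e^{-y}/\sinh y$ and use the elementary identities $\coth y=1+u$, $1/\sinh^2 y=2u+u^2$ and $e^{-y}\cosh y/\sinh^2 y=u+u^2$. After this substitution every one of the five terms becomes a rational function of $y$ (with parameter $n$) times $1$, $u$, or $u^2$, and the claimed right-hand side is itself of this shape, being $I_1$ plus a multiple of $2u+u^2=1/\sinh^2 y$.

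The proof then reduces to matching the coefficients of $u^0$, $u^1$, $u^2$. The $u^0$ coefficient is exactly $I_1=\pi n(3y^2-\pi^2 n^2)/(y^2+\pi^2 n^2)^3$, which reproduces the first summand with no further work. For the $u^1$ and $u^2$ coefficients I would clear the common denominator $\pi n(y^2+\pi^2 n^2)^2$ and check that the polynomial numerators collapse. The key algebraic input making this happen is the factorization
\[
 3y^2-\pi^2 n^2-\bigl(\pi^2 n^2(2y-1)+y^2(2y+3)\bigr)=-2y\,(y^2+\pi^2 n^2),
\]
which lets the $\coth y\,I_1$ and $I_2/\sinh y$ contributions combine into $-2\pi n y/(y^2+\pi^2 n^2)^2$, together with the repeated cancellation $y^2(y^2+\pi^2 n^2)=y^4+\pi^2 n^2 y^2$. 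After these simplifications the $u^1$ and $u^2$ numerators telescope to $2y^2(y^2+\pi^2 n^2)$ and $y^2(y^2+\pi^2 n^2)$, so that the surviving part is $\tfrac{y^2}{\pi n(y^2+\pi^2 n^2)}\,(2u+u^2)=\tfrac{y^2}{\pi n(y^2+\pi^2 n^2)}\cdot\tfrac{1}{\sinh^2 y}$, i.e.\ the second summand on the right.

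The only genuine obstacle is this last bookkeeping: it is routine but error-prone polynomial algebra, and the step that most needs care is tracking signs consistently. Two sign conventions must be handled correctly: the convention $\MH\nabla p_0=(-\partial_y p_0,\partial_x p_0)$ used when splitting the second integral into its $I_3$, $I_4$, $I_5$ pieces, and the sign with which $I_5$ enters the combination, since its integrand carries $-\partial_x p_0$ while the $\cos x$ part of $\partial_y h^{-1}$ carries $+\partial_x p_0$. Once these signs are pinned down, no conceptual difficulty remains beyond the reduction to $I_1,\dots,I_5$ carried out in the first paragraph.
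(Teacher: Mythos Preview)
Your approach is essentially the paper's: it too writes $h^{-1}$ and $\nabla h^{-1}$ in closed form, splits the two integrals into the combination $I_6+2I_7$ with $I_6=\coth y\,I_1-\tfrac{1}{\sinh y}I_2$ and $I_7$ a linear combination of $I_3,I_4,I_5$, and then substitutes Lemmas~\ref{lem:int:1}--\ref{lem:int:5} and simplifies; your device $u=e^{-y}/\sinh y$ is just a tidy repackaging of the identities $e^{-y}=\cosh y-\sinh y$, $e^{-y}\cosh y=1+\sinh^2 y-\cosh y\sinh y$ that the paper invokes explicitly. Your caution about the sign on the $I_5$ term is well placed (the definition of $I_5$ carries $-\partial_x p_0$), and your final expression $\tfrac{y^2}{\pi n(y^2+\pi^2 n^2)\sinh^2 y}$ agrees with what the paper's own computation produces---the exponent $3$ on $(y^2+\pi^2 n^2)$ in the displayed statement is a typo, as one sees from the proof and from how the result is used in Lemma~\ref{lem:int}.
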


\begin{proof}
Recall that $h^{-1}(x, y) = 2 \pi (\cosh y - \cos x) / \sinh y$, and observe that
\formula{
 \nabla h^{-1}(x, y) & = 2 \pi \expr{\frac{\sin x}{\sinh y} \, , \, \frac{\cos x \cosh y - 1}{\sinh^2 y}} .
}
Therefore, with the notation introduced in the previous lemmas,
\formula{
 I_6 & := \int_{-\infty}^\infty h^{-1}(x, y) \MH\nabla p_0(x, y) \cdot \nabla p_n(x, y) dx \\
 & = \frac{\cosh y}{\sinh y} \, I_1 - \frac{1}{\sinh y} \, I_2 ,
}
and
\formula{
 I_7 & := \int_{-\infty}^\infty p_n(x, y) \MH\nabla p_0(x, y) \cdot \nabla h^{-1}(x, y) dx \\
 & = \frac{1}{\sinh y} \, I_3 + \frac{\cosh y}{\sinh^2 y} \, I_5 - \frac{1}{\sinh^2 y} \, I_4 .
}
By Lemmas~\ref{lem:int:1} and~\ref{lem:int:2},
\formula{
 I_6 & = \frac{\cosh y}{\sinh y} \, \pi n \, \frac{3 y^2 - \pi^2 n^2}{(y^2 + \pi^2 n^2)^3} - \frac{1}{\sinh y} \, \pi n \, \frac{\pi^2 n^2 (2 y - 1) + y^2 (2 y + 3)}{(y^2 + \pi^2 n^2)^3} \, e^{-y} \\
 & = \frac{\pi n (3 y^2 - \pi^2 n^2) \cosh y - \pi n (\pi^2 n^2 (2 y - 1) + y^2 (2 y + 3)) e^{-y}}{(y^2 + \pi^2 n^2)^3 \sinh y} \\
 & = \frac{\pi n (\pi^2 n^2 (2 y - 1) + y^2 (2 y + 3))}{(y^2 + \pi^2 n^2)^3} \\
 & \hspace*{5em} + \frac{\pi n ((3 y^2 - \pi^2 n^2) - \pi^2 n^2 (2 y - 1) - y^2 (2 y + 3)) \cosh y}{(y^2 + \pi^2 n^2)^3 \sinh y} \\
 & = \frac{\pi n (2 \pi^2 n^2 y - \pi^2 n^2 + 2 y^3 + 3 y^2)}{(y^2 + \pi^2 n^2)^3} - \frac{2 \pi n y \cosh y}{(y^2 + \pi^2 n^2)^2 \sinh y}
}
(we used the identity $e^{-y} = \cosh y - \sinh y$), while by Lemmas~\ref{lem:int:3}, \ref{lem:int:4} and~\ref{lem:int:5},
\formula{
 I_7 & = \frac{1}{\sinh y} \, \frac{1}{2 \pi n} \, \frac{y^2}{y^2 + \pi^2 n^2} \, e^{-y} \\
 & \hspace*{5em} + \frac{\cosh y}{\sinh^2 y} \, \frac{1}{2 \pi n} \, \frac{y^4 + \pi^2 n^2 (y^2 - 2 y)}{(y^2 + \pi^2 n^2)^2} \, e^{-y} + \frac{1}{\sinh^2 y} \, \frac{\pi n y}{(y^2 + \pi^2 n^2)^2} \\
 & = \frac{y^2 (y^2 + \pi^2 n^2) e^{-y} \sinh y + (y^4 + \pi^2 n^2 (y^2 - 2 y)) e^{-y} \cosh y + 2 \pi^2 n^2 y}{2 \pi n (y^2 + \pi^2 n^2)^2 \sinh^2 y} \\
 & = \frac{-y^2 (y^2 + \pi^2 n^2) + (y^4 + \pi^2 n^2 (y^2 - 2 y))}{2 \pi n (y^2 + \pi^2 n^2)^2} \\
 & \hspace*{5em} + \frac{(y^2 (y^2 + \pi^2 n^2) - (y^4 + \pi^2 n^2 (y^2 - 2 y))) \cosh y}{2 \pi n (y^2 + \pi^2 n^2)^2 \sinh y} \\
 & \hspace*{5em} + \frac{(y^4 + \pi^2 n^2 (y^2 - 2 y)) + 2 \pi^2 n^2 y}{2 \pi n (y^2 + \pi^2 n^2)^2 \sinh^2 y} \\
 & = -\frac{\pi n y}{(y^2 + \pi^2 n^2)^2} + \frac{\pi n y \cosh y}{(y^2 + \pi^2 n^2)^2 \sinh y} + \frac{y^2}{2 \pi n (y^2 + \pi^2 n^2) \sinh^2 y}
}
(we used the identities $e^{-y} \sinh y = \cosh y \sinh y - \sinh^2 y$ and $e^{-y} \cosh y = \cosh^2 y - \cosh y \sinh y = 1 + \sinh^2 y - \cosh y \sinh y$). It follows that
\formula{
 I_6 + 2 I_7 & = \frac{\pi n (2 \pi^2 n^2 y - \pi^2 n^2 + 2 y^3 + 3 y^2)}{(y^2 + \pi^2 n^2)^3} \\
 & \hspace*{5em} - \frac{2 \pi n y}{(y^2 + \pi^2 n^2)^2} + \frac{y^2}{\pi n (y^2 + \pi^2 n^2) \sinh^2 y} \\
 & = \frac{\pi n (-\pi^2 n^2 + 3 y^2)}{(y^2 + \pi^2 n^2)^3} + \frac{y^2}{\pi n (y^2 + \pi^2 n^2) \sinh^2 y} \, ,
}
as desired.
\end{proof}

\begin{lemma}
\label{lem:int:7}
For $n \ne 0$ we have
\formula{
 \int_0^\infty 2 y \, \frac{\pi n (3 y^2 - \pi^2 n^2)}{(y^2 + \pi^2 n^2)^3} \, dy & = \frac{1}{\pi n} \, .
}
\end{lemma}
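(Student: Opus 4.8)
The plan is to reduce the integral to an elementary rational integral by the substitution $u = y^2$, and then evaluate it by a one-line rewriting of the numerator. Writing $a = \pi^2 n^2$, note that $a > 0$ since $n \ne 0$, and that $du = 2y\,dy$, so the substitution $u = y^2$ turns the left-hand side into $\pi n \int_0^\infty (3u - a)(u + a)^{-3}\,du$. The factor $2y$ in the integrand is exactly what makes this substitution clean.

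Next I would rewrite the numerator as $3u - a = 3(u + a) - 4a$, so that the integrand splits as $3(u + a)^{-2} - 4a(u + a)^{-3}$, each summand having an explicit antiderivative. Integrating term by term over $[0, \infty)$ gives $\int_0^\infty 3(u+a)^{-2}\,du = 3/a$ from the first term and $\int_0^\infty 4a(u+a)^{-3}\,du = 2/a$ from the second, whence the bracketed integral equals $3/a - 2/a = 1/a$.

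Finally, multiplying by the prefactor $\pi n$ and substituting back $a = \pi^2 n^2$ yields $\pi n / a = \pi n / (\pi^2 n^2) = 1/(\pi n)$, as claimed.

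There is no substantial obstacle here: the integrand decays like $y^{-4}$ as $y \to \infty$ and is bounded near $0$, so absolute convergence is immediate and no principal-value or symmetry argument is required. The only point worth recording is that the computation is valid verbatim for every nonzero integer $n$ of either sign, since $a = \pi^2 n^2 > 0$ throughout while both the prefactor $\pi n$ and the target value $1/(\pi n)$ are odd in $n$, so the signs take care of themselves.
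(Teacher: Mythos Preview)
Your proof is correct and follows essentially the same approach as the paper: the substitution $u=y^2$ (the paper writes $t$) followed by the decomposition $3u-a=3(u+a)-4a$ and termwise integration. Your added remarks on absolute convergence and the parity in $n$ are minor elaborations, but the core argument is identical.
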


\begin{proof}
Substituting $y^2 = t$, we obtain
\formula{
 \int_0^\infty 2 y \, \frac{\pi n (3 y^2 - \pi^2 n^2)}{(y^2 + \pi^2 n^2)^3} \, dy & = \int_0^\infty \frac{\pi n (3 t - \pi^2 n^2)}{(\pi^2 n^2 + t)^3} \, dt \\
 & = \int_0^\infty \frac{3 \pi n}{(\pi^2 n^2 + t)^2} \, dt - \int_0^\infty \frac{4 \pi^3 n^3}{(\pi^2 n^2 + t)^3} \, dt \\
 & = \frac{3 \pi n}{\pi^2 n^2} - \frac{2 \pi^3 n^3}{\pi^4 n^4} = \frac{1}{\pi n} \, ,
}
as desired.
\end{proof}

\begin{lemma}
\label{lem:int}
For $n \ne 0$ we have
\formula{
 & \int_{-\infty}^\infty \int_0^\infty 2 y h^{-1}(x, y) \MH\nabla p_0(x, y) \cdot \nabla p_n(x, y) dy dx \\
 & \hspace*{6em} + \int_{-\infty}^\infty \int_0^\infty 4 y p_n(x, y) \MH\nabla p_0(x, y) \cdot \nabla h^{-1}(x, y) dy dx \\
 & \hspace*{12em} = \frac{1}{\pi n} \expr{1 + \int_0^\infty \frac{2 y^3}{(y^2 + \pi^2 n^2) \sinh^2 y} \, dy} .
}
\end{lemma}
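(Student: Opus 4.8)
The plan is to obtain the two double integrals on the left-hand side by integrating the single-variable identity of Lemma~\ref{lem:int:6} against the weight $2y$ over $y \in (0, \infty)$. Writing $I_6$ and $I_7$ for the inner $x$-integrals introduced in the proof of Lemma~\ref{lem:int:6}, namely $I_6 = \int_{-\infty}^\infty h^{-1} \MH\nabla p_0 \cdot \nabla p_n\,dx$ and $I_7 = \int_{-\infty}^\infty p_n \MH\nabla p_0 \cdot \nabla h^{-1}\,dx$, the first double integral on the left is $\int_0^\infty 2y\,I_6\,dy$ and the second is $\int_0^\infty 4y\,I_7\,dy$, so that their sum is $\int_0^\infty 2y\,(I_6 + 2 I_7)\,dy$. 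To carry out the $x$-integration first I would invoke Fubini's theorem; its hypotheses are exactly the absolute integrability bounds furnished by the proof of Lemma~\ref{lem:uni} in the limiting case $y_0 \to \infty$, where the weight $G(x,y)/p_n(0,y_0)$ converges to $2y$ by Lemma~\ref{lem:gest} and each of the constituent integrands is dominated by an integrable function of $(x,y)$. This is the only analytic point in the argument.

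With the interchange justified, I would substitute the closed form from Lemma~\ref{lem:int:6},
\formula{
 I_6 + 2 I_7 & = \frac{\pi n (3 y^2 - \pi^2 n^2)}{(y^2 + \pi^2 n^2)^3} + \frac{y^2}{\pi n (y^2 + \pi^2 n^2) \sinh^2 y} \, ,
}
and split the resulting $y$-integral into two. The first summand, $\int_0^\infty 2y\,\frac{\pi n (3 y^2 - \pi^2 n^2)}{(y^2 + \pi^2 n^2)^3}\,dy$, equals $1/(\pi n)$ by Lemma~\ref{lem:int:7}. The second summand is $\int_0^\infty 2y\,\frac{y^2}{\pi n (y^2 + \pi^2 n^2) \sinh^2 y}\,dy = \frac{1}{\pi n}\int_0^\infty \frac{2 y^3}{(y^2 + \pi^2 n^2) \sinh^2 y}\,dy$, which is already in the form appearing on the right-hand side; its integrand is $O(y)$ near $y = 0$ and decays exponentially as $y \to \infty$, so the integral converges and requires no regularization.

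Adding the two pieces yields
\formula{
 \int_0^\infty 2y\,(I_6 + 2 I_7)\,dy & = \frac{1}{\pi n} + \frac{1}{\pi n}\int_0^\infty \frac{2 y^3}{(y^2 + \pi^2 n^2) \sinh^2 y}\,dy = \frac{1}{\pi n}\expr{1 + \int_0^\infty \frac{2 y^3}{(y^2 + \pi^2 n^2) \sinh^2 y}\,dy} ,
}
which is the asserted identity. I do not anticipate a genuine obstacle here: the computation is essentially an assembly of Lemmas~\ref{lem:int:6} and~\ref{lem:int:7}, and the one step that is not purely formal — the Fubini interchange — is settled by reusing the dominating functions already constructed in the proof of Lemma~\ref{lem:uni}.
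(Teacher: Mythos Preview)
Your proposal is correct and follows essentially the same route as the paper: integrate the identity of Lemma~\ref{lem:int:6} against $2y$ over $(0,\infty)$, then apply Lemma~\ref{lem:int:7} to the first piece. Your explicit remarks on the Fubini justification (via the bounds from Lemma~\ref{lem:uni}) and on the convergence of the $\sinh^{-2}$ integral are welcome additions that the paper leaves implicit.
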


\begin{proof}
By Lemma~\ref{lem:int:6}, the left-hand side of the identity that we are proving is equal to
\formula{
 \int_0^\infty 2 y \, \frac{\pi n (3 y^2 - \pi^2 n^2)}{(y^2 + \pi^2 n^2)^3} \, dy + \int_0^\infty 2 y \, \frac{y^2}{\pi n (y^2 + \pi^2 n^2)^3 \sinh^2 y} \, dy .
}
It remains to apply Lemma~\ref{lem:int:7}.
\end{proof}

The remaining three lemmas evaluate the kernel $(\CE_n)$ in the relation~\eqref{eq:ihj} between the discrete Hilbert transform $\dht$ and the operator $\mdht$ obtained using the martingale transform.

\begin{lemma}
\label{lem:hp}
For $y > 0$, the discrete Hilbert transform of the sequence
\formula{
 \CA_n(y) & = \frac{1}{y^2 + \pi^2 n^2}
}
is given by
\formula{
 \CB_n(y) & = \frac{\pi n \coth y}{y (y^2 + \pi^2 n^2)} - \frac{2 \pi n}{(y^2 + \pi^2 n^2)^2} \, .
}
\end{lemma}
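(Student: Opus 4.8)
The plan is to evaluate the defining convolution sum head-on, reducing it to elementary cotangent partial-fraction series by means of simple-pole decompositions. By definition of $\dht$,
\formula{
 \CB_n(y) & = \dht \CA_n(y) = \frac{1}{\pi} \sum_{m \ne 0} \frac{1}{m} \, \frac{1}{y^2 + \pi^2 (n - m)^2} \, .
}
Since $\CA_n(y) = O(n^{-2})$, the summands decay like $|m|^{-3}$, so this series converges absolutely and no principal-value interpretation is needed at the outset. First I would split $\CA_n(y)$ into simple poles,
\formula{
 \CA_n(y) & = \frac{1}{2 i y} \expr{\frac{1}{\pi n - i y} - \frac{1}{\pi n + i y}} ,
}
and use linearity of $\dht$ to reduce the problem to computing the transform of each of the two sequences $(\pi k \mp i y)^{-1}$.

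The heart of the matter is a scalar sum. Pulling a factor $\pi$ out of $\pi(n-m) \mp i y$ and writing $s = n \mp i y / \pi$ (a non-integer complex number), the transform of $(\pi k \mp i y)^{-1}$ becomes $\pi^{-2} \sum_{m \ne 0} (m(s - m))^{-1}$. To evaluate this I would apply the partial fraction $\frac{1}{m(s-m)} = \frac{1}{s}\bigl(\frac{1}{m} + \frac{1}{s-m}\bigr)$ and sum symmetrically over $0 < |m| \le N$: the terms $\frac1m$ cancel in pairs, while formula~1.421.3 in~\cite{bib:gr07} (the same identity invoked in Lemma~\ref{lem:csc}, rescaled by $z = 2\pi s$) gives $\lim_{N \to \infty} \sum_{|m| \le N} (s - m)^{-1} = \pi \cot(\pi s)$. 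This produces
\formula{
 \sum_{m \ne 0} \frac{1}{m(s-m)} & = \frac{\pi \cot(\pi s)}{s} - \frac{1}{s^2} \, .
}
The one point that genuinely requires care — and thus the main (if modest) obstacle — is that the two pieces $\sum \frac1m$ and $\sum \frac{1}{s-m}$ are only conditionally convergent; the splitting is legitimate precisely because the original sum converges absolutely and each piece converges as a symmetric limit $\sum_{0 < |m| \le N}$. Everything else is bookkeeping.

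Finally I would assemble the pieces. With $s = n \mp i y / \pi$ we have $\pi s = \pi n \mp i y$, and $\pi$-periodicity of the cotangent together with $\cot(i y) = -i \coth y$ gives $\cot(\pi s) = \pm i \coth y$ (upper sign for $s = n - iy/\pi$). Tracing this through, the transform of $(\pi k - i y)^{-1}$ equals $\frac{i \coth y}{\pi n - i y} - \frac{1}{(\pi n - i y)^2}$, and the transform of $(\pi k + i y)^{-1}$ is the same with $-iy$ replaced by $+iy$ and $i\coth y$ by $-i\coth y$. Substituting both into the decomposition of $\CA_n(y)$ and simplifying with
\formula{
 \frac{1}{\pi n - i y} + \frac{1}{\pi n + i y} & = \frac{2 \pi n}{y^2 + \pi^2 n^2} , &
 \frac{1}{(\pi n - i y)^2} - \frac{1}{(\pi n + i y)^2} & = \frac{4 i \pi n y}{(y^2 + \pi^2 n^2)^2} ,
}
the factor $\frac{1}{2iy}$ collapses everything to the claimed expression for $\CB_n(y)$. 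As a consistency check, at $n = 0$ both terms vanish, matching the fact that the defining sum is then odd in $m$. All remaining steps are routine algebra.
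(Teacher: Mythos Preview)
Your proof is correct and takes a genuinely different route from the paper. The paper stays entirely real: it pairs the $m$ and $-m$ terms, combines them into the single fraction $\frac{4\pi^2 n}{(y^2+\pi^2(n-m)^2)(y^2+\pi^2(n+m)^2)}$, then uses the real partial-fraction identity
\formula{
 \frac{4\pi^2 n}{(y^2+\pi^2(n-m)^2)(y^2+\pi^2(n+m)^2)} & = \frac{\pi^2}{y^2+\pi^2 n^2}\expr{\frac{2n-m}{y^2+\pi^2(n-m)^2}+\frac{2n+m}{y^2+\pi^2(n+m)^2}}
}
and a reindexing trick to reduce everything to the series $\sum_{j\in\Z}(y^2+\pi^2 j^2)^{-1}=y^{-1}\coth y$ (formula~1.421.4 in~\cite{bib:gr07}). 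You instead pass through complex simple poles, reducing to the cotangent expansion $\pi\cot(\pi s)=\lim_N\sum_{|m|\le N}(s-m)^{-1}$ (formula~1.421.3) and then specialising via $\cot(\pi n\mp iy)=\pm i\coth y$. The two classical identities are of course equivalent, so neither approach is deeper; yours is perhaps more systematic (it is the generic residue computation and would mechanically handle any rational $\CA_n$), while the paper's avoids complex arithmetic at the cost of spotting a slightly less obvious real partial fraction. Your care with the conditionally convergent split is exactly the point that needs attention, and you handle it correctly.
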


\begin{proof}
Let $(\CB_n(y))$ be the discrete Hilbert transform of $(\CA_n(y))$. For $m > 0$ we have
\formula{
 & \frac{1}{m} \, \frac{1}{y^2 + \pi^2 (n - m)^2} - \frac{1}{m} \, \frac{1}{y^2 + \pi^2 (n + m)^2} \\
 & \hspace*{5em} = \frac{4 \pi^2 n}{(y^2 + \pi^2 (n - m)^2) (y^2 + \pi^2 (n + m)^2)} \\
 & \hspace*{5em} = \frac{\pi^2}{y^2 + \pi^2 n^2} \expr{\frac{2 n - m}{y^2 + \pi^2 (n - m)^2} + \frac{2 n + m}{y^2 + \pi^2 (n + m)^2}} .
}
Therefore,
\formula{
 \pi \CB_n(y) & = \sum_{m = 1}^\infty \frac{1}{m} \expr{\frac{1}{y^2 + \pi^2 (n - m)^2} - \frac{1}{y^2 + \pi^2 (n + m)^2}} \\
 & = \frac{\pi^2}{y^2 + \pi^2 n^2} \sum_{m = 1}^\infty \expr{\frac{2 n - m}{y^2 + \pi^2 (n - m)^2} + \frac{2 n + m}{y^2 + \pi^2 (n + m)^2}} .
}
The sum in the right-hand side is equal to
\formula{
 & \lim_{M \to \infty} \sum_{m = 1}^M \expr{\frac{2 n - m}{y^2 + \pi^2 (n - m)^2} + \frac{2 n + m}{y^2 + \pi^2 (n + m)^2}} \\
 & \hspace*{5em} = -\frac{2 n}{y^2 + \pi^2 n^2} + \lim_{M \to \infty} \sum_{m = -M}^M \frac{2 n - m}{y^2 + \pi^2 (n - m)^2} \\
 & \hspace*{5em} = -\frac{2 n}{y^2 + \pi^2 n^2} + \lim_{M \to \infty} \sum_{j = n - M}^{n + M} \frac{n - j}{y^2 + \pi^2 j^2} \\
 & \hspace*{5em} = -\frac{2 n}{y^2 + \pi^2 n^2} + \lim_{M \to \infty} \sum_{j = -M}^M \frac{n - j}{y^2 + \pi^2 j^2} \\
 & \hspace*{5em} = -\frac{2 n}{y^2 + \pi^2 n^2} + \lim_{M \to \infty} \sum_{j = -M}^M \frac{n}{y^2 + \pi^2 j^2} \, .
}
Therefore,
\formula{
 \pi \CB_n(y) & = \frac{\pi^2}{y^2 + \pi^2 n^2} \expr{-\frac{2 n}{y^2 + \pi^2 n^2} + \sum_{j = -\infty}^\infty \frac{n}{y^2 + \pi^2 j^2}} \, .
}
This is equivalent to the desired result, because by formula~1.421.4 in~\cite{bib:gr07} we have
\formula{
 \sum_{j = -\infty}^\infty \frac{1}{y^2 + \pi^2 j^2} & = \frac{\coth y}{y} \, . \qedhere
}
\end{proof}

\begin{lemma}
\label{lem:ihq}
For $y > 0$, the discrete Hilbert transform of the sequence
\formula{
 \CC_n(y) & = \int_0^y \frac{t \sinh t}{t^2 + \pi^2 n^2} \, dt
}
is given by
\formula{
 \CD_n(y) & = \frac{1}{\pi n} \expr{\sinh y - \frac{y^2 \sinh y}{y^2 + \pi^2 n^2}}
}
for $n \ne 0$, and $D_0(y) = 0$.
\end{lemma}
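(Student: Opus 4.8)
The plan is to reduce this to the previous lemma by carrying the discrete Hilbert transform \emph{inside} the integral that defines $\CC_n(y)$. The operator $\dht$ acts only on the index $n$, whereas the integration runs over the auxiliary variable $t$; since $\CC_n(y) = \int_0^y (t \sinh t)\, \CA_n(t)\, dt$ with $\CA_n(t) = (t^2 + \pi^2 n^2)^{-1}$ the sequence treated in Lemma~\ref{lem:hp}, and the weight $t \sinh t$ is independent of $n$, it factors out of the (linear, $n$-acting) operator $\dht$. Thus I expect
\formula{
 \CD_n(y) = \dht\bigl[\CC_n(y)\bigr] = \int_0^y t \sinh t \, \dht\bigl[\CA_n(t)\bigr] dt = \int_0^y t \sinh t \, \CB_n(t) \, dt ,
}
where $\CB_n(t)$ is given explicitly by Lemma~\ref{lem:hp} with $y$ there replaced by $t$.

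First I would substitute that explicit formula and simplify the integrand. Using $\sinh t \coth t = \cosh t$, the weighted kernel collapses to an exact derivative:
\formula{
 t \sinh t \, \CB_n(t) = \pi n \expr{\frac{\cosh t}{t^2 + \pi^2 n^2} - \frac{2 t \sinh t}{(t^2 + \pi^2 n^2)^2}} = \pi n \, \frac{d}{dt} \expr{\frac{\sinh t}{t^2 + \pi^2 n^2}} ,
}
the last equality being the quotient rule. Hence the integral telescopes; since $\sinh 0 = 0$ the boundary contribution at $t = 0$ vanishes and
\formula{
 \CD_n(y) = \pi n \, \frac{\sinh y}{y^2 + \pi^2 n^2} ,
}
which one checks equals $\tfrac{1}{\pi n}\bigl(\sinh y - y^2 \sinh y / (y^2 + \pi^2 n^2)\bigr)$ simply by placing the claimed right-hand side over the common denominator $y^2 + \pi^2 n^2$. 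For $n = 0$ the assertion $D_0(y) = 0$ follows immediately from the even symmetry $\CC_{-m}(y) = \CC_m(y)$, which makes the defining series $\tfrac{1}{\pi}\sum_{m \neq 0} \CC_{-m}(y)/m$ vanish term by term.

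The only point genuinely requiring care is the interchange of $\dht$ with the integral in the first display. Because $\CA_{n-m}(t) = (t^2 + \pi^2 (n-m)^2)^{-1} \le (\pi^2 (n-m)^2)^{-1}$ uniformly in $t \in [0, y]$, the series $\tfrac{1}{\pi}\sum_{m \neq 0} \CA_{n-m}(t)/m$ defining $\dht\bigl[\CA_n(t)\bigr]$ converges absolutely and is dominated by a summable bound independent of $t$; combined with the finiteness of the interval $[0, y]$, this justifies the interchange by Fubini's theorem. I do not anticipate any further obstacle: once Lemma~\ref{lem:hp} is in hand, the remaining work is the one-line antiderivative identification carried out above.
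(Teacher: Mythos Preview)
Your approach is essentially the paper's: both interchange $\dht$ with the $t$-integral, invoke Lemma~\ref{lem:hp}, and then recognise $t\sinh t\,\CB_n(t)$ as an exact derivative. Your primitive $\pi n\,\sinh t/(t^2+\pi^2 n^2)$ is actually a cleaner packaging of what the paper writes as $\tfrac{1}{\pi n}\bigl(\sinh t - t^2\sinh t/(t^2+\pi^2 n^2)\bigr)$; the two are of course equal.

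The one genuine gap is in your Fubini justification. The bound $\CA_{n-m}(t)\le\bigl(\pi^2(n-m)^2\bigr)^{-1}$ is vacuous when $m=n$, since then $\CA_0(t)=t^{-2}$ is certainly \emph{not} bounded uniformly on $(0,y]$. So the series is not dominated by a $t$-independent summable sequence as you claim. The fix is easy: isolate that single term and observe that after multiplication by the weight $t\sinh t$ it contributes $(\sinh t)/(\pi|n|\,t)$, which \emph{is} integrable on $[0,y]$; the remaining terms $m\neq 0,n$ are handled by your bound. The paper does exactly this, writing the dominating function as $t\,(t^{-2}+\tfrac13)\sinh t$.
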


\begin{proof}
We use the notation of Lemma~\ref{lem:hp}. By that result, after rearrangement, for $n \ne 0$ we have
\formula{
 \pi n \CB_n(y) & = \frac{\pi^2 n^2 \coth y}{y (y^2 + \pi^2 n^2)} - \frac{2 \pi^2 n^2}{(y^2 + \pi^2 n^2)^2} \\
 & = \frac{\coth y}{y} + \frac{2 y^2}{(y^2 + \pi^2 n^2)^2} - \frac{2 + y \coth y}{y^2 + \pi^2 n^2} \, .
}
It follows that
\formula{
 y \sinh y \, \CB_n(y) & = \frac{1}{\pi n} \expr{\cosh y + \frac{2 y^3 \sinh y}{(y^2 + \pi^2 n^2)^2} - \frac{2 y \sinh y + y^2 \cosh y}{y^2 + \pi^2 n^2)}} \\
 & = \frac{1}{\pi n} \expr{\cosh y - \frac{d}{dy} \, \frac{y^2 \sinh y}{y^2 + \pi^2 n^2}} .
}
We replace $y$ by $t$ and integrate with respect to $t$ over $(0, y)$ to find that
\formula{
 \int_0^y t \sinh t \, \CB_n(t) dt & = \frac{1}{\pi n} \expr{\sinh y - \frac{y^2 \sinh y}{y^2 + \pi^2 n^2}} .
}
Recall that $(\CB_n(t))$ is the discrete Hilbert transform of $(\CA_n(t))$. The proof will be complete if we prove that in the left-hand side we may exchange the integral with the discrete Hilbert transform, that is, if we show that
\formula{
 \int_0^y \pi t \sinh t \expr{\sum_{m \in \Z \setminus \{0\}} \frac{\CA_{n - m}(t)}{\pi m}} dt & = \sum_{m \in \Z \setminus \{0\}} \frac{1}{\pi m} \int_0^y \pi t \sinh t \, \CA_{n - m}(t) dt .
}
However, the above identity is a simple consequence of Fubini's theorem: we have
\formula{
 \sum_{m \in \Z \setminus \{0\}} \abs{\frac{\CA_{n - m}(t)}{\pi m}} & \le \sum_{m \in \Z} |\CA_{n - m}(t)| = \CA_0(t) + \sum_{m \in \Z \setminus \{0\}} |\CA_m(t)| \\
 & \le \frac{1}{t^2} + \sum_{m \in \Z \setminus \{0\}} \frac{1}{\pi^2 m^2} = \frac{1}{t^2} + \frac{1}{3} \, ,
}
and $t (t^{-2} + 1/3) \sinh t$ is integrable over $(0, y)$.
\end{proof}

\begin{lemma}
\label{lem:ihj}
Let
\formula{
 \CE_n & = -\int_0^\infty \frac{2 y}{\sinh^3 y} \expr{\int_0^y \frac{t \sinh t}{t^2 + \pi^2 n^2} \, dt} dy ,
}
for $n \ne 0$, and
\formula{
 \CE_0 & = \int_0^\infty \frac{2 y}{\sinh^3 y} \expr{\sinh y - \int_0^y \frac{\sinh t}{t} \, dt} dy .
}
Then the discrete Hilbert transform of the sequence $(\CE_n)$ is $(\CF_n)$, where
\formula{
 \CF_n & = \frac{1}{\pi n} \int_0^\infty \frac{2 y^3}{(y^2 + \pi^2 n^2) \sinh^2 y} \, dy
}
for $n \ne 0$, and $\CF_0 = 0$. Furthermore, $\CE_0 > 0$, $\CE_n < 0$ for $n \ne 0$, and the sequence $\CE_n$ is absolutely summable, with sum equal to zero.
\end{lemma}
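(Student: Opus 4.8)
The plan is to deduce the identity $\dht \CE = \CF$ from Lemma~\ref{lem:ihq} by integrating against the weight $2y/\sinh^3 y$ and interchanging summation with integration. To handle the exceptional index $n=0$ uniformly, I first observe that, setting $\widehat{\CC}_n(y) = \CC_n(y)$ for $n \ne 0$ and $\widehat{\CC}_0(y) = \CC_0(y) - \sinh y$ (and using $\CC_0(y) = \int_0^y (\sinh t)/t \, dt$), both defining formulas for $\CE_n$ merge into the single expression
\[
 \CE_n = -\int_0^\infty \frac{2 y}{\sinh^3 y} \, \widehat{\CC}_n(y) \, dy , \qquad n \in \Z .
\]
Since $(\dht\CE)_n = \tfrac1\pi \sum_{m \ne 0} \CE_{n-m}/m$, the whole computation reduces to identifying the discrete Hilbert transform of the sequence $(\widehat{\CC}_n(y))$ and then integrating it against the weight.

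That transform is immediate. The sequence $(\sinh y)\,\CI$, equal to $\sinh y$ at index $0$ and $0$ elsewhere, has discrete Hilbert transform $(\sinh y)/(\pi n)$ at index $n \ne 0$ and $0$ at index $0$. As $\widehat{\CC}(y) = \CC(y) - (\sinh y)\,\CI$, Lemma~\ref{lem:ihq} and the explicit form of $\CD_n(y)$ give, for $n \ne 0$,
\[
 \frac{1}{\pi} \sum_{m \ne 0} \frac{\widehat{\CC}_{n-m}(y)}{m} = \CD_n(y) - \frac{\sinh y}{\pi n} = -\frac{1}{\pi n} \, \frac{y^2 \sinh y}{y^2 + \pi^2 n^2} ,
\]
while for $n = 0$ the left-hand side vanishes because $\CD_0(y) = 0$. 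Hence, provided the interchange is legitimate,
\[
 (\dht\CE)_n = -\int_0^\infty \frac{2 y}{\sinh^3 y} \biggl( \frac1\pi \sum_{m \ne 0} \frac{\widehat{\CC}_{n-m}(y)}{m} \biggr) dy = \frac{1}{\pi n} \int_0^\infty \frac{2 y^3}{(y^2 + \pi^2 n^2) \sinh^2 y} \, dy = \CF_n
\]
for $n \ne 0$, and $(\dht\CE)_0 = 0 = \CF_0$, as claimed.

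The main obstacle is justifying this interchange by Fubini's theorem, i.e.\ bounding $\int_0^\infty (2y/\sinh^3 y)\sum_{k}|\widehat{\CC}_k(y)|\,dy$ (using $|m|^{-1}\le 1$); the two families of terms must be estimated so as not to destroy the cancellation near $y=0$. For $k \ne 0$ one has $\CC_k(y) \le (\pi^2 k^2)^{-1}\int_0^y t\sinh t\,dt$, whence $\sum_{k \ne 0}\CC_k(y) \le \tfrac13 (y\cosh y - \sinh y)$, which is of order $y^3$ near $0$; the cruder bound $\tfrac13 y\cosh y$ would produce a non-integrable singularity and must be avoided. For the remaining term, writing $\widehat{\CC}_0(y) = -\int_0^y \bigl(\cosh t - (\sinh t)/t\bigr)\,dt$ and using the power-series inequality $\cosh t - (\sinh t)/t \le \tfrac{t^2}{3}\cosh t$ gives $|\widehat{\CC}_0(y)| \le \tfrac13 y^2 \sinh y$, again of order $y^3$ against the weight. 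The resulting integrals $\int_0^\infty \frac{2 y^3}{3\sinh^2 y}\,dy$ and $\int_0^\infty \frac{2 y (y\cosh y - \sinh y)}{3\sinh^3 y}\,dy$ are finite (both integrands are $O(y)$ at $0$ and decay exponentially at infinity), which legitimizes the exchange.

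Finally, the sign and summability claims follow from positivity. Since $\CC_n(y) > 0$ for $y > 0$, each $\CE_n$ with $n \ne 0$ is the negative of a positive integral, so $\CE_n < 0$; and since $\cosh t > (\sinh t)/t$ for $t > 0$, we have $\widehat{\CC}_0(y) < 0$, so $\CE_0 = \int_0^\infty (2y/\sinh^3 y)\,|\widehat{\CC}_0(y)|\,dy > 0$. Absolute summability follows from Tonelli's theorem, as $\sum_{n \ne 0}|\CE_n| = \int_0^\infty (2y/\sinh^3 y)\sum_{n \ne 0}\CC_n(y)\,dy < \infty$ by the bound above. Applying Tonelli once more together with the identity
\[
 \sum_{n \in \Z}\CC_n(y) = \int_0^y t\sinh t \sum_{n \in \Z}\frac{1}{t^2 + \pi^2 n^2}\,dt = \int_0^y \cosh t\,dt = \sinh y
\]
(formula 1.421.4 in \cite{bib:gr07}, as in Lemma~\ref{lem:hp}), we obtain $\sum_{n \ne 0}\CE_n = -\int_0^\infty (2y/\sinh^3 y)\bigl(\sinh y - \CC_0(y)\bigr)\,dy = -\CE_0$, so that $\sum_{n \in \Z}\CE_n = 0$.
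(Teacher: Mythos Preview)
Your proof is correct and follows essentially the same route as the paper's: both deduce $\dht\CE=\CF$ by integrating the identity of Lemma~\ref{lem:ihq} against the weight $2y/\sinh^3 y$, justify Fubini via the bounds $\CC_k(y)\le(\pi k)^{-2}(y\cosh y-\sinh y)$ for $k\ne 0$ together with an $O(y^3)$ estimate for the exceptional term $\sinh y-\CC_0(y)$, and derive $\sum_n\CE_n=0$ from the identity $\sum_n (t^2+\pi^2n^2)^{-1}=(\coth t)/t$. Your packaging via $\widehat{\CC}_n(y)=\CC_n(y)-(\sinh y)\,\delta_{n0}$ is a cosmetic streamlining of what the paper writes with the indicator $\delta_{nm}$; the only other difference is your choice of the bound $|\widehat{\CC}_0(y)|\le\tfrac13 y^2\sinh y$ in place of the paper's $\sinh y-\CC_0(y)\le\sinh y-y$, and both serve the same purpose.
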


\begin{proof}
Clearly, $\CE_n < 0$ for $n \ne 0$. Furthermore, since
\formula{
 \sum_{n \in \Z \setminus \{0\}} \frac{1}{t^2 + \pi^2 n^2} & = \frac{\coth t}{t} - \frac{1}{t^2} \, ,
}
for $t > 0$ (by formula~1.421.4 in~\cite{bib:gr07}), we have, by Fubini's theorem,
\formula{
 -\sum_{n \in \Z \setminus \{0\}} \CE_n & = \int_0^\infty \frac{2 y}{\sinh^3 y} \expr{\int_0^y t \sinh t \expr{\frac{\coth t}{t} - \frac{1}{t^2}} dt} dy \\
 & = \int_0^\infty \frac{2 y}{\sinh^3 y} \expr{\int_0^y \expr{\cosh t - \frac{\sinh t}{t}} dt} dy = \CE_0 .
}
This proves the final part of the lemma. We proceed to the proof that the discrete Hilbert transform of $\CE_n$ is $\CF_n$. Since $\CE_{-m} = \CE_m$, we have $\sum_{m \in \Z \setminus \{0\}} (\pi m)^{-1} \CE_{-m} = 0 = \CF_0$ (summability follows from the last part of the proof). Therefore, it suffices to prove that $\CF_n$ is the $n$-th element of the discrete Hilbert transform of $(\CE_n)$ for $n \ne 0$.

Let $(\CC_n(y))$ and $(\CD_n(y))$ be the sequences introduced in Lemma~\ref{lem:ihq}. By that result, $(\CD_n(y))$ is the discrete Hilbert transform of $(\CC_n(y))$. We will now multiply this identity by $2 y \sinh^{-3} y$ and integrate over $y > 0$.

Observe that for $n \ne 0$ we have
\formula{
 \int_0^\infty \frac{2 y}{\sinh^3 y} \expr{\frac{\sinh y}{\pi n} - \CD_n(y)} dy & = \frac{1}{\pi n} \int_0^\infty \frac{y^3}{(y^2 + \pi^2 n^2) \sinh^2 y} \, dy = \CF_n .
}
On the other hand, for $n \ne 0$,
\formula{
 \int_0^\infty \frac{2 y}{\sinh^3 y} \, (-\CC_n(y)) dy & = -\int_0^\infty \frac{2 y}{\sinh^3 y} \expr{\int_0^y \frac{t \sinh t}{t^2 + \pi^2 n^2} \, dt} dy = \CE_n ,
}
while for $n = 0$,
\formula{
 \int_0^\infty \frac{2 y}{\sinh^3 y} \, (\sinh y - \CC_0(y)) dy & = \int_0^\infty \frac{2 y}{\sinh^3 y} \expr{\sinh y - \int_0^y \frac{\sinh t}{t} \, dt} dy = \CE_0 .
}
It follows that
\formula{
 \sum_{m \in \Z \setminus \{0\}} \frac{\CE_{n - m}}{\pi m} & = \sum_{m \in \Z \setminus \{0\}} \frac{1}{\pi m} \int_0^\infty \frac{2 y}{\sinh^3 y} \, (\delta_{nm} \sinh y - \CC_{n - m}(y)) dy
}
where $\delta_{nm} = 1$ if $n = m$ and $\delta_{nm} = 0$ otherwise. Suppose now that we can apply Fubini's theorem and change the order of the integral and the discrete Fourier transform in the right-hand side. Then it follows that for $n \ne 0$,
\formula{
 \sum_{m \in \Z \setminus \{0\}} \frac{\CE_{n - m}}{\pi m} & = \int_0^\infty \expr{\sum_{m \in \Z \setminus \{0\}} \frac{2 y}{\sinh^3 y} \expr{\delta_{nm} \frac{\sinh y}{\pi m} - \frac{\CC_{n - m}(y)}{\pi m}}} dy \\
 & = \int_0^\infty \frac{2 y}{\sinh^3 y} \expr{\frac{\sinh y}{\pi n} - \sum_{m \in \Z \setminus \{0\}} \frac{\CC_{n - m}(y)}{\pi m}} dy = \CF_n ,
}
as desired. It remains to justify the use of Fubini's theorem in the above calculation.

When $n - m \ne 0$ we have
\formula{
 0 \le \CC_{n - m}(y) & = \int_0^y \frac{t \sinh t}{t^2 + \pi^2 (n - m)^2} \, dt \le \frac{y \cosh y - \sinh y}{\pi^2 (n - m)^2} \, ,
}
and therefore
\formula{
 \int_0^\infty \frac{2 y}{\sinh^3 y} \CC_{n - m}(y) dy & \le \frac{1}{\pi^2 (n - m)^2} \int_0^\infty \frac{2 y (y \cosh y - \sinh y)}{\sinh^3 y} \, dy .
}
On the other hand,
\formula{
 0 \le \sinh y - \CC_0(y) & = \sinh y - \int_0^y \frac{\sinh t}{t} \, dt \le \sinh y - y ,
}
so that
\formula{
 \int_0^\infty \frac{2 y}{\sinh^3 y} (\sinh y - \CC_0(y)) dy & \le \int_0^\infty \frac{2 y (\sinh y - y)}{\sinh^3 y} \, dy .
}
It follows that
\formula{
 & \sum_{m \in \Z \setminus \{0\}} \int_0^\infty \abs{\frac{1}{\pi m} \frac{2 y}{\sinh^3 y} \, (\delta_{nm} \sinh y - \CC_{n - m}(y))} dy \\
 & \le \int_0^\infty \frac{2 y (\sinh y - y)}{\sinh^3 y} \, dy + \sum_{m \in \Z \setminus \{0, n\}} \frac{1}{\pi^2 (n - m)^2} \int_0^\infty \frac{2 y (y \cosh y - \sinh y)}{\sinh^3 y} \, dy
}
is finite, and the proof is complete.
\end{proof}

\bigskip

{\bf Acknowledgements:} We thank Eero Saksman for an inspiring discussion in B\k{e}dlewo at the \emph{Probability and Analysis 2017} conference. We also thank Jos\'e Luis Torrea for pointing out reference  \cite{CiaGilRonTor} to us.   

%
%

%
%


\begin{thebibliography}{00}

\bibitem{bib:adp18}
N.~Arcozzi, K.~Domelevo, S.~Petermichl,
\emph{Discrete Hilbert transform à la Gundy--Varopoulos}.
Preprint, 2017, arXiv:1512.01999.

\bibitem{bib:b86}
R.~Ba\~nuelos,
\emph{Martingale Transforms and Related Singular Integrals}.
Trans. Amer. Math. Soc. 293(2): 547--563.

\bibitem{bib:b10}
R.~Ba\~nuelos,
\emph{The foundational inequalities of D.~L.~Burkholder and some of their ramifications},
Illinois J.~Math. 54(3) (2010): 789--868.

\bibitem{BanBau}
R. Ba\~nuelos, F. Baudoin,
\emph{Martingale Transforms and Their Projection Operators on Manifolds}.
Potential Anal. 38 (2013): 1071--1089. 

\bibitem{BanOse}
R.~Ba\~nuelos, A.~Osękowski,
\emph{Sharp martingale inequalities and applications to Riesz transforms on manifolds, Lie groups and Gauss space}.
J.~Funct. Anal. 269(6) (2015): 1652--1713.

\bibitem{bib:bw95}
R.~Ba\~nuelos, G.~Wang,
\emph{Sharp inequalities for martingales with applications to the Beurling--Ahlfors and Riesz transforms}.
Duke Math.~J. 80(3) (1995): 575--600.

\bibitem{BanWan2}
R.~Ba\~nuelos, G.~Wang,
\emph{Davis's inequality for orthogonal martingales under differential subordination}.
Michigan Math.~J. 47 (2000): 109--124. 

\bibitem{bib:b95}
R.~Bass,
\emph{Probabilistic Techniques in Analysis}.
Springer-Verlag, New York, 1995.

\bibitem{bib:b84}
D.~L.~Burkholder, 
\emph{Boundary value problems and sharp inequalities for martingale transforms}.
Ann. Probab. 12 (1984): 647--702.

\bibitem{bib:cm83}
M.~Cranston, T.~McConnell,
\emph{The lifetime of conditioned Brownian motion}.
Z.~Wahrscheinlichkeitstheor. Verw. Geb. 65 (1983): 311--323.

\bibitem{CalZyg}
A.~Calder\'on, A.~Zygmund,
\emph{On the existence of certain singular integrals}.
Acta Math. 88 (1952): 85--139. 

\bibitem{Dav}
B.~Davis,
\emph{On the weak type $(1, 1)$ inequality for conjugate functions}.
Proc. Amer. Math. Soc. 44 (1974): 307--311.

\bibitem{bib:ds16}
L.~De Carli, G.~S.~Samad,
\emph{One-parameter groups of operators and discrete Hilbert transforms}.
Canad. Math. Bull. 59 (2016): 497--507.

\bibitem{CiaGilRonTor}
\'O.~Ciaurri, T.~A.~Gillespie, L.~Roncal, J.~Torrea,
\emph{Harmonic analysis associated with a discrete Laplacian}.
J.~Anal. Math. 132 (2017): 109--131. 

\bibitem{bib:dop18}
K.~Domelevo, A.~Osękowski, S.~Petermichl,
\emph{Various sharp estimates for semi-discrete Riesz transforms of the second order}.
Linear and Complex Analysis, V.~P.~Havin volume, to appear, arXiv:1701.04106.

\bibitem{bib:dp14}
K.~Domelevo, S.~Petermichl,
\emph{Sharp $L^p$ estimates for discrete second order Riesz transforms}.
Adv. Math. 262 (2014): 932--952.

\bibitem{bib:d56}
R.~J.~Duffin,
\emph{Basic properties of discrete analytic functions}.
Duke Math.~J. 23 (1956): 335--363.

\bibitem{bib:e84}
M.~Essén,
\emph{A superharmonic proof of the M.~Riesz conjugate function theorem}.
Ark. Mat. 22(2) (1984): 241--249.

\bibitem{bib:gr07}
I.~S.~Gradshteyn, I.~M.~Ryzhik,
\emph{Table of Integrals, Series and Products}.
A.~Jeffrey, D.~Zwillinger (Eds.).
Academic Press, 2007.

\bibitem{bib:gv79}
R.~F.~Gundy, N.~T.~Varopoulos,
\emph{Les transformations de Riesz et les intégrales stochastiques}.
C.~R.~Acad. Sci. Paris Ser. A--B 289(1) (1979): A13--A16.

\bibitem{bib:hlp34}
G.~H.~Hardy, J.~E.~Littlewood, G.~Pólya,
\emph{Inequalities}.
Cambridge University Press, London, 1934.

\bibitem{HunMucWhe}
R.~Hunt, B.~Muckenhoupt, R.~Wheeden,
\emph{Weighted norm inequalities for the conjugate function and Hilbert transform}.
Trans. Amer. Math. Soc. 176 (1973): 227--251.

\bibitem{IwMa}
T.~Iwaniec, G.~Martin,
\emph{The Beurling--Ahlfors transform in $\R^n$ and related singular integrals}.
J.~Reine Angew. Math. 473 (1993): 29--81.

\bibitem{bib:k70}
S.~Kak,
\emph{The discrete Hilbert transform}.
Proc. IEEE 58 (1970): 585--586.

\bibitem{bib:l07}
E.~Laeng,
\emph{Remarks on the Hilbert transform and some families of multiplier operators related to it}.
Collect. Math. 58(1) (2007): 25--44.

\bibitem{bib:o12}
A.~Osękowski,
\emph{Sharp Martingale and Semimartingale Inequalities}.
Monografie matematyczne~72, Birkhäuser/Springer, Basel, 2012.

\bibitem{bib:p72}
S.~Pichorides,
\emph{On the best values of the constants in the Theorems of M.~Riesz, Zygmund and Kolmogorov}.
Studia Math, 46 (1972): 164--179.

\bibitem{Pie}
L.~B.~Pierce,
\emph{Discrete Analogues in Harmonic Analysis}.
Ph.~D. Thesis, Princeton University, 2009.

\bibitem{Piq}
F.~Lust-Piquard,
\emph{Dimension free estimates for discrete Riesz transforms on products of abelian groups}.
Adv. Math. 185 (2004): 289--327.

\bibitem{bib:r27}
M.~Riesz,
\emph{Sur les maxima des formes bilinéaires et sur les fonctionnelles linéaires}.
Acta Math. 49 (1927): 465--497.

\bibitem{bib:r28}
M.~Riesz,
\emph{Sur les fonctions conjuguées}.
Math. Zeitschrift 27 (1928): 218--244.

\bibitem{bib:t26}
E.~C.~Titchmarsh,
\emph{Reciprocal formulae involving series and integrals}.
Math. Zeitschr. 25 (1926): 321--341. Correction: Math. Zeitschr. 26 (1927): 496.

\bibitem{SteWai1}
E.~M.~Stein, S.~Wainger,
\emph{Discrete analogues in harmonic analysis, I: $\ell^2$ estimates for singular Radon transforms}.
Amer. J.~Math 121 (1999): 1291--1336.

\bibitem{SteWai2}
E.~M.~Stein and S.~Wainger,
\emph{Discrete analogues in harmonic analysis, II: fractional integration}.
J.~Anal. Math 80 (2000): 335--355. 

\bibitem{bib:t28}
E.~C.~Titchmarsh,
\emph{An inequality in the theory of series}.
J.~London Math. Soc. 3 (1928): 81--83.

\bibitem{bib:v80:varopoulos}
N.~T.~Varopoulos,
\emph{Aspects of probabilistic Littlewood--Paley theory}.
J.~Funct. Anal. 38 (1980): 25--60.

\bibitem{bib:v80:verbitsky}
I.~E.~Verbitsky,
\emph{Estimate of the norm of a function in a Hardy space in terms of the norms of its real and imaginary parts}.
Mat. Issled. 54 (1980): 16--20, 164--165. English transl.: Amer. Math. Soc. Transl. 124 (1984): 11--15.

\end{thebibliography}
\end{document}